\renewcommand{\email}[2][]{%
	\ifx\emails\@empty\relax\else{\g@addto@macro\emails{,\space}}\fi%
	\@ifnotempty{#1}{\g@addto@macro\emails{\textrm{(#1)}\space}}%
	\g@addto@macro\emails{#2}%
}
\newtheorem{thm}{Theorem}[section]
\newtheorem{lem}[thm]{Lemma}
\newtheorem{prop}[thm]{Proposition}
\theoremstyle{definition}
\newtheorem{defn}[thm]{Definition}
\theoremstyle{remark}
\newtheorem{rem}[thm]{Remark}
\numberwithin{equation}{section}
\newcommand{\BMO}{\textup{BMO}_\rho}
\newcommand{\loc}{\textup{loc}}
\DeclareMathOperator{\supp}{supp}
\setlist[enumerate,1]{label=(\roman*)}
\begin{document}
	
	\title[Weighted estimates for Schr\"odinger--Calder\'on--Zygmund operators with exponential...]
	{Weighted estimates for Schr\"odinger--Calder\'on--Zygmund operators with exponential decay}
	
\author[E. Dalmasso]{Estefan\'ia Dalmasso$^{1}$}
\address{$^1$Instituto de Matem\'atica Aplicada del Litoral, UNL, CONICET, FIQ.\newline Colectora Ruta Nac. Nº 168, Paraje El Pozo,\newline S3007ABA, Santa Fe, Argentina}
\email
{edalmasso@santafe-conicet.gov.ar}

\author[G. R. Lezama]{Gabriela R. Lezama$^{1}$}
\email
{rlezama@santafe-conicet.gov.ar}

\author[M. Toschi]{Marisa Toschi$^{2}$}
\address{$^2$Instituto de Matem\'atica Aplicada del Litoral, UNL, CONICET, FHUC.\newline Colectora Ruta Nac. Nº 168, Paraje El Pozo,\newline S3007ABA, Santa Fe, Argentina}
\email{mtoschi@santafe-conicet.gov.ar}

	\date{\today}
	\subjclass{Primary 42B20; Secondary 35J10, 42B25, 42B35, 47G10} 
	
	\keywords{Extrapolation, Schr\"odinger operators, Riesz transforms, Laplace multipliers, weights}

	\begin{abstract}
		In this work we obtain weighted boundedness results for singular integral operators with kernels exhibiting exponential decay. We also show that the classes of weights are characterized by a suitable maximal operator. Additionally, we study the boundedness of various operators associated with the generalized Schr\"odinger operator $-\Delta + \mu$, where $\mu$ is a nonnegative Radon measure in $\mathbb{R}^d$, for~${d\geq 3}$.
	\end{abstract}
	\maketitle
	
\section{Introduction and main result}

In \cite{Bailey21}, J. Bailey studied about harmonic analysis related to the Schr\"odinger operator $\mathcal{L}= -\Delta+V$, for $V$ a locally integrable function in $\mathbb{R}^d$ with $d\geq 3$. 
Asking the question  whether
it is possible to construct a Muckenhoupt-type class of weights adapted to the underlying differential
operator and whether this class can be characterized in terms of the corresponding
operators, he defined new classes of weights. These classes, named  $S^V_{p,c}$ and $H^{V,m}_{p,c}$, are comparable under certain dependence on the parameters $m$ and $c$. The second one looks at the weights in the Euclidean distance, while the first one is adapted to another distance depending on the potential $V$. Both classes of weights are 
strictly larger than the class $A_p^{V, \infty}$ given in \cite{BHS11} (see \S\ref{sec: weights}).

Actually, since the classes can be described through a critical radius function associated to the operator~$\mathcal{L}$, we will name the classes  $S^\rho_{p,c}$, $H^{\rho,m}_{p,c}$ and  $A^\rho_{p}$, respectively. This idea together with the main tool used throughout the work, the decay of the fundamental solution of the operator $\mathcal{L}$, leads us to think about leaving particular examples aside, for the moment, and address a general theory    to prove the boundedness of singular integral operators having kernels with exponential decay on weighted $L^p$ spaces. Regarding the weights, we will deal with the class {$H^{\rho,m}_{p,c}$} since one can have a better understanding of the behavior of the weights with respect to the Euclidean distance. An analysis of the classes of weights is given in \S\ref{subsec: weights properties}, including a simple example that shows the difference between them.

In the spirit of \cite[Theorem~5.1]{BCH13Extrapol}, we give an extrapolation theorem (see Theorem~\ref{thm: extrapolation H weights}) on weighted Lebesgue spaces for weights associated to the maximal operator defined in \cite{Bailey21}, and prove boundedness results for certain collection of operators that we shall introduce in \S\ref{sec: operators}. In order to do so, we first complete the work done by J. Bailey and prove that the classes {$H^{\rho,m}_{p,c}$} are sufficient to ensure the boundedness of the mentioned maximal function, which is an important result by itself (see \S\ref{subsec: maximal operators}).

The article will conclude with the presentation of certain examples of operators associated with the differential operator $\mathcal{L}_\mu = -\Delta + \mu$, along with their corresponding boundedness properties. Here, $\mu$ is a nonnegative Radon measure on \(\mathbb{R}^d\), as considered in \cite{Shen99} and \cite{Bailey21}. These operators include the Riesz transforms $R_\mu=\nabla (-\Delta + \mu)^{-\frac{1}{2}}$ and its adjoint ${R^*_\mu=(-\Delta + \mu)^{-\frac{1}{2}} \nabla }$, Laplace transform type multipliers, as well as operators of the form $(-\Delta +V)^{-j/2} V^{j/2}$ for $j=1,2$ with $V$ a   nonnegative function belonging to the classical reverse H\"older class $RH_q$ for $q>\frac d2.$ 

Throughout this paper $C$ and $c$ will always denote positive constants than may change in each occurrence. Also, by $a\lesssim b$ we mean there exists a positive constant $c$ such that $a\leq cb$, and $a\gtrsim b$ is the same as $b\lesssim a$. When both are valid, we simply write $a\sim b$.

\section{Preliminaries}

\subsection{Critical radius function and Agmon distance}
We introduce the notion of critical radius function as any function satisfying the definition given below.
\begin{defn}\label{def: critical radius function}
	A function $\rho: \mathbb R^d\to [0,\infty)$ will be called  a \textbf{critical radius function} if there exist constants $k_0, C_0 \geq 1$ such that
	\begin{equation}\label{eq: critical radius ineq}
		C_0^{-1}\rho(x)\left(1 + \frac{|x-y|}{\rho(x)}\right)^{-k_0}\leq \rho(y)\leq C_0\rho(x)\left(1+\frac{|x-y|}{\rho(x)}\right)^{\frac{k_0}{k_0+1}}
	\end{equation}
	for $x,y \in \mathbb{R}^d$. 
\end{defn}

\begin{rem}
	It follows from the definition that $\rho(x)\sim \rho(y)$ if  $|x-y|\lesssim \rho(x)$.
\end{rem}

A ball of the form $B(x, \rho(x))$, with $x \in \mathbb{R}^d$, is called \textbf{critical} and a ball $B(x,r)$ with $r\leq  \rho(x)$ will be called \textbf{sub-critical}. We denote by $\mathcal{B}_\rho$ the family of all sub-critical balls, that is,
\begin{equation*}
	\mathcal{B}_\rho=\left\{B(x,r): x \in \mathbb{R}^d,\,  r\leq \rho(x)\right\}.
\end{equation*}

It is well-known (see, for instance, \cite{Agmon} and \cite{Helffer}) that many phenomena involving the classical Schr\"odinger operator $L_V=-\Delta+V$, with a potential $V$, can be better described in terms of the Agmon distance, which is comparable to the Euclidean distance locally, but is a Riemannian-like distance obtained by deforming the Euclidean one with respect to a quadratic form involving the potential. 

In our setting, a notion of Agmon distance can also be given as:
\begin{equation*}
	d_\rho(x,y):= \inf_\gamma \int_{0}^{1} \rho(\gamma(t))^{-1}|\gamma'(t)|dt,
\end{equation*} 
where the infimum is taken over every absolutely continuous function   ${\gamma:[0,1]\rightarrow \mathbb{R}^d}$ with  $\gamma(0)=x$ and $\gamma(1)=y$. 

The following lemma establishes the aforementioned local comparison between the Agmon distance and the Euclidean distance.

\begin{lem}[{\cite[Lemma~2.2]{Bailey21}}]\label{eq: drho equiv usual dist}
	Let $\rho:\mathbb{R} \rightarrow [0,\infty)$ be a critical radius function. If $|x-y| \leq 2\rho(x)$, there exists $D_{0}>1$ such that 
	\begin{equation}
		\frac{1}{D_0}\frac{|x-y|}{\rho(x)}\leq d_\rho(x,y)\leq D_{0}\frac{|x-y|}{\rho(x)}.
	\end{equation}
\end{lem}

The Agmon distance can also be compared with the quantity  $1+\frac{|x-y|}{\rho(x)}$ as stated below.

\begin{lem}[{\cite[Lemma~2.3]{Bailey21}, \cite[(3.19)]{Shen99}}]\label{lem: drho comparable local}
	Let $\rho:\mathbb{R} \rightarrow [0,\infty)$ be a critical radius function. Then, 
	\begin{equation*}
		d_\rho(x,y) \leq C_0 \left(1+ \frac{| x-y| }{\rho(x)}\right)^{k_0+1}  
	\end{equation*}
	for every $x,y \in \mathbb{R}^d$, where $C_0$ and $k_0$ are the constants appearing in \eqref{eq: critical radius ineq}.
	
	Moreover, when  $|x-y|\geq \rho(x)$, there exists a constant $D_{1}>1$ such that
	\begin{equation*}
		d_\rho(x,y) \geq \frac{1}{D_1}\left(1+\frac{|x-y| }{\rho(x)}\right)^{\frac{1}{k_0+1}}.
	\end{equation*}
\end{lem}

\subsection{Spaces of functions}

Given a weight $w$, i.e., a nonnegative and locally integrable function defined on $\mathbb{R}^d$, for $1\leq p<\infty$ we say that a measurable function $f$ on $\mathbb{R}^d$ belongs to $L^p(w)$ iff $\|f\|_{L^p(w)}:=\left(\int_{\mathbb{R}^d}|f(x)|^p w(x)dx\right)^{1/p}<\infty$. 
For $w\equiv 1$ we simply write $L^p(w)=L^p$. 

When $p=\infty$, $f\in L^\infty(w)$ iff $fw\in L^\infty$ and we write $\|f\|_{L^{\infty}(w)}=\|fw\|_\infty$ where this quantity denotes, as usual, the essential supremum of $fw$ on $\mathbb{R}^d$.

Given a critical radius function~$\rho$ and a weight $w$, we say that a locally integrable function $f$ belongs to $\BMO(w)$ if it satisfies
\begin{equation}\label{eq: osc average bmo}
	\frac{\|\chi_B w\|_\infty}{|B|}\int_B| f-f_B| \leq C, \quad \text{for every ball } B=B(x,r) \text{ with }r<\rho(x)
\end{equation}  
and 
\begin{equation}\label{eq: average bmo}
	\frac{\|\chi_B w\|_\infty}{|B|}\int_B | f | \leq C, \quad  \text{for every ball }B=B(x,r)\text{ with } r= \rho(x),
\end{equation}
where $f_B$ stands for the average of $f$ over the ball $B$. As usual, the norm $\|f\|_{\BMO(w)}$ is defined as the maximum of the infima constants appearing in \eqref{eq: osc average bmo} and \eqref{eq: average bmo}.

It is a classical fact that
\[\sup_{B}\frac{1}{|B|}\int_B| f-f_B|\sim \sup_{B} \inf_{a\in \mathbb{R}}\frac{1}{|B|}\int_B| f-a|.\]
This allows us to verify \eqref{eq: osc average bmo} for some constant $a\in \mathbb R$, not necessarily the average~$f_B$.

Moreover, as it was proved in \cite{BCH13Extrapol}, the space $\BMO(w)$ can be characterized in terms of a suitable sharp maximal function.

\begin{lem}[{\cite[Lemma~2]{BCH13Extrapol}}]\label{lem: BMO Msharp}
	For any weight $w$ and any $f\in L^1_{\loc}(\mathbb{R}^d)$,   $\|f\|_{\BMO(w)}\sim \|M_{\loc}^\sharp(f)w\|_{\infty}$
	where
	\[M_{\loc}^\sharp(f)(x):=\sup_{x\in B\in \mathcal{B}_{\rho}}\frac{1}{|B|}\int_B|f-f_B|+\sup_{x\in B=B(y,\rho(y))}\frac{1}{|B|}\int_B|f|.\]
\end{lem}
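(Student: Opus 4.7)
My plan is to prove the two inequalities of the equivalence separately by duality between the weighted-average condition defining $\BMO(w)$ and the pointwise $L^\infty(w)$ bound on $M_{\loc}^\sharp(f)$. The key observation is that $\|\chi_B w\|_\infty$ equals the essential supremum of $w$ on $B$, so $w(x)\leq\|\chi_B w\|_\infty$ for almost every $x\in B$, with the essential supremum attained as $x$ varies. Since $\|f\|_{\BMO(w)}$ is the maximum of the two constants in \eqref{eq: promedio osc bmo} and \eqref{eq: promedio bmo}, while $M_{\loc}^\sharp(f)$ is the sum of two suprema, the equivalence holds up to a factor of two and it suffices to treat each term separately.

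For the direction $\|M_{\loc}^\sharp(f)w\|_\infty\lesssim \|f\|_{\BMO(w)}$, I would fix a sub-critical ball $B\ni x$. Condition \eqref{eq: promedio osc bmo} gives $\frac{1}{|B|}\int_B|f-f_B|\leq \|f\|_{\BMO(w)}/\|\chi_B w\|_\infty$, and multiplying by $w(x)\leq\|\chi_B w\|_\infty$ yields $w(x)\cdot\frac{1}{|B|}\int_B|f-f_B|\leq \|f\|_{\BMO(w)}$ for a.e.\ $x\in B$. To take the supremum over $B\ni x$ while preserving the almost-everywhere statement, I would restrict to a countable subfamily of $\mathcal{B}_\rho$ that is dense under the continuity of $(y,r)\mapsto\int_{B(y,r)}|f-f_{B(y,r)}|$ for $f\in L^1_{\loc}$. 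The union of the associated null sets is still null, and after passing to suprema the desired bound on the first term of $M_{\loc}^\sharp(f)(x)w(x)$ follows. The contribution from critical balls $B=B(y,\rho(y))\ni x$ is handled identically using \eqref{eq: promedio bmo}.

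For the reverse inequality, fix $B\in\mathcal{B}_\rho$. The quantity $\frac{1}{|B|}\int_B|f-f_B|$ is independent of $x$, and by definition it is at most $M_{\loc}^\sharp(f)(x)$ for every $x\in B$. Multiplying by $w(x)$ and using $M_{\loc}^\sharp(f)(x)w(x)\leq \|M_{\loc}^\sharp(f)w\|_\infty$ a.e., we obtain $\frac{1}{|B|}\int_B|f-f_B|\cdot w(x)\leq \|M_{\loc}^\sharp(f)w\|_\infty$ for a.e.\ $x\in B$. Taking the essential supremum in $x\in B$ converts $w(x)$ into $\|\chi_B w\|_\infty$ and recovers \eqref{eq: promedio osc bmo}. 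Condition \eqref{eq: promedio bmo} follows by the same reasoning with $|f|$ in place of $|f-f_B|$.

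The only subtle point is the $B$-dependent null set in the upper-bound direction, which I handle with the countable-subfamily approximation described above; beyond this, the lemma is a clean pointwise reformulation of the weighted BMO condition and presents no genuine obstacle.
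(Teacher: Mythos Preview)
The paper does not supply a proof of this lemma: it is stated with a citation to \cite[Lemma~2]{BCH13} and used as a black box, so there is no in-paper argument to compare against.

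Your argument is correct and is exactly the natural one. Both directions are straightforward once one observes that for any ball $B$ and a.e.\ $x\in B$ one has $w(x)\le\|\chi_B w\|_\infty$, so the weighted average condition $\|\chi_B w\|_\infty\cdot\tfrac{1}{|B|}\int_B|f-f_B|\le C$ is equivalent, after moving the weight factor across, to a pointwise bound $w(x)\cdot\tfrac{1}{|B|}\int_B|f-f_B|\le C$ holding for a.e.\ $x\in B$. The only point that requires care is the one you flag: when passing from a bound valid for each individual $B$ (with a $B$-dependent null set) to a bound on the supremum over all $B\ni x$, one must reduce to a countable family. Your continuity-plus-density argument handles this cleanly, since $(y,r)\mapsto\tfrac{1}{|B(y,r)|}\int_{B(y,r)}|f-f_{B(y,r)}|$ is continuous for $f\in L^1_{\loc}$ by dominated convergence, and similarly for the critical-ball term using continuity of $\rho$. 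The reverse direction needs no such care because the average is constant in $x$ and the essential supremum is taken last.
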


\subsection{Schr\"odinger--Calder\'on--Zygmund operators with exponential decay} \label{sec: operators}

In the following results we consider an operator $T$ with kernel $K$, understood in the principal value sense, that satisfies certain Calder\'on--Zygmund or H\"ormander-type conditions, having exponential decay. The terminology used here follows \cite{BHQ19} (see also \cite{MSTZ}), although their operators, characterized by polynomial decay, were previously considered in \cite{BCH13Extrapol} without being explicitly named.

\begin{defn}\label{def: s-delta type}
	For $1<s< \infty$ and $0<\delta \leq 1$ we say that $T$ is  an \textbf{exponential Schr\"odinger--Calder\'on--Zygmund operator of $(s, \delta)$ type} if
	\begin{enumerate}
		\item \label{itm: weak-type T}  $T$ is bounded from $L^{s'}(\mathbb{R}^d)$ to $L^{s',\infty}(\mathbb{R}^d)$ (where $s'$ denotes the H\"older conjugate of $s$); 
		\item $T $ has an associated kernel $K$ verifying the following conditions:
		\begin{enumerate}
			\item there exist constants $c,C >0$ and $m>0$ such that 
			\begin{equation}\label{eq: size Hormander}
				\left(\frac{1}{R^d}\int_{R<|x_0-y|\leq 2R}|K(x,y)|^s dy \right)^{1/s}
				\leq 
				\frac{C}{R^{d}}\exp\left(-c \left(1+\frac{R}{\rho(x)}\right)^{m}\right),
			\end{equation}
			whenever $|x-x_0|<R/2$;
			\item there exist a constant $C>0$ such that
			\begin{equation}\label{eq: smooth Hormander}
				\left(\frac{1}{R^d}\int_{R<|x_0-y|\leq 2R}|K(x,y)-K(x_0,y)|^s dy \right)^{1/s}
				\leq  
				\frac{C}{R^{d}}\left(\frac{r}{R}\right)^\delta 
			\end{equation}
			for every $|x-x_0|<r\leq \rho(x_0)$ and $r<R/2$.
		\end{enumerate}
	\end{enumerate}
\end{defn}

For the case $s = \infty$, we consider pointwise estimates instead, for which we introduce the following definition.

\begin{defn} \label{def: infty-delta type}
	Given   $0<\delta \leq 1$ we say that   $T$  is an \textbf{exponential Schr\"odinger--Calder\'on--Zygmund operator of $(\infty, \delta)$ type} if 
	\begin{enumerate}
		\item $T$ is bounded on $L^p(\mathbb{R}^d)$ for every $p>1$; 
		
		\item $T$ has an associated kernel $K$ verifying the following conditions:
		\begin{enumerate}
			\item there exist constants $c,C >0$ and $m>0$ such that 
			\begin{equation}\label{eq: size pointwise}
				|K(x,y)|\leq 
				\frac{C}{|x-y|^{d}}\, \exp\left(-c \left(1+\frac{|x-y|}{\rho(x)}\right)^{m}\right), \quad \text{for every } x\neq y;
			\end{equation}
			\item there exist a constant $C>0$ such that
			\begin{equation}\label{eq: smooth pointwise}
				|K(x,y)-K(x_0,y)|\leq  \frac{C}{|x-y|^{d}}\left(\frac{|x-x_0|}{|x-y|}\right)^\delta, 
			\end{equation}
			for every $|x-y|>2|x-x_0|$. 
		\end{enumerate}
	\end{enumerate}
\end{defn}

\subsection{Weights associated to a critical radius function} \label{sec: weights}

The class $A^{\rho}_{p}$ introduced in \cite{BHS11} properly contains the classical Muckenhoupt weights. For ${1<p<\infty}$, they are denoted by $A^{\rho}_{p} =\bigcup_{\theta\geq 0}A^{\rho,\theta}_p$,
where $w\in A^{\rho,\theta}_p$ means that
\begin{align*}
	\left(\int_B w\right)^{\frac{1}{p}}\left(\int_B w^{-\frac{1}{p-1}}\right)^{\frac{p-1}{p}} \leq C|B| \left(1+ \frac{r}{\rho(x)}\right)^\theta
\end{align*}
for every ball $B=B(x,r)$ with center $x \in \mathbb{R}^d$ and radius $r>0$. Similarly, when $p=1$, we denote by $A^\rho_1 =\bigcup_{\theta\geq 0}A^{\rho,\theta}_1$, where $A_1^{\rho,\theta}$ collects weights $w$ such that 
\begin{equation*}
	\int_B w \leq C|B| \left(1+\frac{r}{\rho(x)}\right)^\theta \inf_B w,
\end{equation*}
for every ball $B=B(x,r)$.

Let us also consider $A^{\rho,\loc}_{p}$ as defined in \cite{BCH19}. That is, a weight $w$ belongs to the class $A^{\rho,\loc}_{p}$ for $1<p<\infty$ if there exists a constant $C>0$ such that 
\begin{equation*}
	\left(\int_{B} w\right)^{\frac{1}{p}}\left(\int_{B} w^{-\frac{1}{p-1}}\right)^{\frac{p-1}{p}} \leq C|B|
\end{equation*}
for balls $B \in \mathcal{B}_\rho$, and for $p=1$ if \begin{equation*}
	\int_B w \leq C|B| \inf_B w,
\end{equation*}
holds for every $B \in \mathcal{B}_\rho$.

The weights involved in our main result were given by J. Bailey in \cite{Bailey21}. For $1<p<\infty$ and $c \geq 0$ we denote by $H^{\rho}_{p,c}:=\bigcup_{m\geq 0}H^{\rho,m}_{p,c},$ where $H^{\rho,m}_{p,c}$ is the class of weights $w$ for which there exists a constant $C$ such that
\begin{align*}
	\left(\int_B w\right)^{\frac{1}{p}}\left(\int_B w^{-\frac{1}{p-1}}\right)^{\frac{p-1}{p}} \leq C|B| \exp \left(c\left(1+ \frac{r}{\rho(x)}\right)^m\right),
\end{align*}
for each ball $B=B(x,r)$.

When $p=1$, for $c \geq 0$ we denote by $H^{\rho}_{1,c}:= \bigcup_{m\geq 0}H^{\rho,m}_{1,c},$ where ${w\in H^{\rho,m}_{1,c}}$ means that
\begin{align*}
	\int_B w \leq C|B| \exp \left(c\left(1+ \frac{r}{\rho(x)}\right)^m\right)\inf_{B} w,
\end{align*}
for each ball $B=B(x,r)$.

Clearly, for any $1\leq p<\infty$, $H^{\rho,m_1}_{p,c_1}\subseteq H^{\rho,m_2}_{p,c_2}$ whenever $c_1\leq c_2$ and $m_1\leq m_2$.

Notice that when $c=0$, we recover the Muckenhoupt $A_p$ classes. Moreover, it is easy to see that the classes $H^{\rho}_{p,c}$ share several key properties with the $A_p$ weights, as we establish in Lemma~\ref{lem: H properties}.

It is also straightforward to verify that the weights in $H^{\rho,m}_{p,c}$ meet the following doubling condition.

\begin{defn}
	Let $\kappa \geq1$ and $c>0$. We denote by  $D^{\rho}_{\kappa,c}:= \bigcup_{m \geq 0}D_{\kappa,c}^{\rho,m}$, where $D_{\kappa,c}^{\rho,m}$ is the class of weights $w$ such that there exists a constant $C$ for which
	\begin{align*}
		w(B(x,R))\leq C \left(  \frac{R}{r}\right)^{d\kappa}\exp{\left(c\left(1+ \frac{R}{\rho(x)}\right)^m \right)}w(B(x,r))
	\end{align*}
	for all  $x \in \mathbb{R}^d$ and $r<R$.
	
\end{defn}

\begin{rem}\label{obs: H doubling}	If $w \in H^{\rho,m}_{p,c}$, by H\"older's inequality it easy to check that ${w \in D^{\rho,m}_{p,cp}}$.  
\end{rem}

We will also deal with the reverse H\"older classes of weights. In \cite[Lemma~5]{BHS11} the authors prove that a weight $w\in A^\rho_p$, $1\leq p<\infty$, verifies the following reverse H\"older inequality
\begin{align}\label{eq: RH gamma}
	\left(\frac{1}{|B|}\int_B w ^{\eta}\right)^{\frac{1}{\eta}}\leq C   \left(\frac{1}{|B|}\int_B w\right)\left(1+ \frac{r}{\rho(x)}\right)^\beta
\end{align}
for every ball $B=B(x,r)$ and some constants $\beta, C>0$ and $\eta>1$.

Reverse H\"older classes adapted to the context of weights with exponential growth will also be needed.

\begin{defn}\label{def: RH classes}
	Let $\eta >1$ and $c>0$. We denote by  $RH_{\eta,c}^\rho:=\bigcup_{m \geq 0} RH_{\eta,c}^{\rho,m}$, where $RH_{\eta,c}^{\rho,m}$ is defined as those weights $w$ such that 
	\begin{align}\label{eq: RH exp}
		\left(\frac{1}{|B|}\int_B w ^{\eta}\right)^{\frac{1}{\eta}}\leq C    \left(\frac{1}{|B|}\int_B w\right)\exp \left(c\left(1+ \frac{r}{\rho(x)}\right)^m\right)
	\end{align}
	for every ball $B=B(x,r)$ and some constant $C$ independent of $B$.
\end{defn}
Clearly, \eqref{eq: RH gamma} implies \eqref{eq: RH exp}. 

\begin{rem}
	Note that when taking $m=0$, $RH^{\rho,0}_{p,c}$ and $D_{\kappa,c}^{\rho,0}$ are the classical  reverse H\"older classes and doubling classes, respectively.
\end{rem}

\section{Auxiliary results}

\subsection{Some properties for $H _{p,c}^{\rho}$}\label{subsec: weights properties}

The importance of the following result lies in the fact that the class of weights involved in  Theorem~\ref{thm: extrapolation H weights} generalizes those presented in \cite{BHQ19}. Additionally, the second inclusion enables the application of established tools for $A^{\rho,\loc}_p$. While this inclusion was originally obtained in \cite{Bailey21}, we provide a direct proof that avoids the need for intermediate weight classes and eliminates certain parameter restrictions. 

\begin{prop}\label{prop: classes inclusions} 
	Let ${1\leq p}<\infty$. For any $c>0$ we have
	\begin{equation*}
		A^\rho_p \subsetneq	H_{p,c}^{\rho} \subseteq A^{\rho,\loc}_{p}.
	\end{equation*}
\end{prop}

\begin{proof}
	The first inclusion can be found in \cite[Proposition~3.2]{Bailey21}.
	
	For the second inclusion, we consider $w \in H _{p,c}^{\rho,m}$ and a ball $B=B(x,r)$ in $\mathcal{B}_\rho$. {If $p>1$,} we get
	\begin{align*}
		\left(\int_B w\right)^{\frac{1}{p}}\left(\int_B w^{-\frac{1}{p-1}}\right)^{\frac{p-1}{p}}&\leq C|B|\exp\left(c\left(1+\frac{r}{\rho(x)}\right)^m\right)\\
		&\leq C|B|\exp\left(c2^m\right)\leq C|B|,
	\end{align*}
	where the constant is independent of $B$. Hence, $w\in A^{\rho,\loc}_{p}$. 
	
	{If $p=1$, we can obtain the inclusion in a similar way.}
	
	For the critical radius function $\rho(x)=\min\left\{1,\frac{1}{|x|}\right\}$ (which arises when dealing with the harmonic oscillator whose potential is $V(x)=|x|^2$), we shall see that $\omega(x)=e^{|x|}$ belongs to $H_{p,c}^{\rho}$ for some $c>0$, but $w\notin A_p^\rho$.
	
	Let $B=B(y,r)$ be a ball for $y\in \mathbb{R}^d$ and $r>0$, {and consider $p>1$}. Then,
	\begin{equation}\label{eq: exp weight bound}
		\left(\int_{B} e^{|x|} dx\right)^{\frac1p}\leq |B|^{\frac1p} e^{\frac{|y|+r}{p}}
	\end{equation}
	and
	\[\left(\int_{B} e^{-\frac{|x|}{p-1}} dx\right)^{\frac{p-1}{p}}\leq |B|^{1-\frac{1}{p}} e^{-\frac{|y|-r}{p}},\]
	imply that
	\[\left(\int_{B} e^{|x|} dx\right)^{\frac1p}\left(\int_{B} e^{-\frac{|x|}{p-1}} dx\right)^{\frac{p-1}{p}}\leq |B|e^{\frac{2}{p}r}.\]
	Now, notice that $r\leq \frac{r}{\rho(y)}$ for every $y\in \mathbb{R}^d$ and $r>0$, since $r=\frac{r}{\rho(y)}$ when $|y|\leq 1$ and $r\leq r|y|= \frac{r}{\rho(y)}$ when $|y|> 1$. Thus,
	\[\left(\int_{B} e^{|x|} dx\right)^{\frac1p}\left(\int_{B} e^{-\frac{|x|}{p-1}} dx\right)^{\frac{p-1}{p}}\leq |B|\exp\left(\frac2p \left(1+\frac{r}{\rho(y)}\right)\right)\]
	meaning that $\omega\in H_{p, 2/p}^{\rho}$. {When $p=1$, one can see that \eqref{eq: exp weight bound} still holds. Besides,
		\[\inf_{B} e^{|x|}\geq e^{|y|-r},\]
		which gives, as before, 
		\[\int_{B} e^{|x|} dx\leq |B|e^{2r}\inf_B e^{|x|}\leq |B|\exp\left(2 \left(1+\frac{r}{\rho(y)}\right)\right)\inf_B e^{|x|},\]
		and $\omega\in H_{1,2}^{\rho}$.}
	
	In order to see that $\omega\notin A_p^{\rho}$, we will follow the ideas given in \cite[p.~367]{Bailey18}. {Fix $p>1$.} Let us consider the balls $B_\ell=B(0,\ell)$ for $\ell>1$. On one hand, we can obtain
	\[\left(\int_{B_{2\ell}} e^{|x|} dx\right)^{\frac1p}\geq \left(\int_{B_{2\ell}\setminus B_\ell} e^{|x|} dx\right)^{\frac1p}=\left(\int_\ell^{2\ell} e^t t^{d-1} dt\right)^{\frac1p} 
	\left(e^{2\ell}-e^\ell\right)^{\frac1p}.\]
	On the other hand,
	\begin{align*}
		\left(\int_{B_{2\ell}} e^{-\frac{|x|}{p-1}} dx\right)^{\frac{p-1}{p}}&\geq \left(\int_{B_{2\ell}\setminus B_\ell} e^{-\frac{|x|}{p-1}} dx\right)^{\frac{p-1}{p}}\\
		&\gtrsim \left(e^{-\frac{\ell}{p-1}}-e^{-\frac{2\ell}{p-1}}\right)^{\frac{p-1}{p}}\gtrsim e^{-\frac{\ell}{p}}.
	\end{align*}
	Combining these inequalities, we obtain 
	\[\left(\fint_{B_{2\ell}} e^{|x|} dx\right)^{\frac1p}\left(\fint_{B_{2\ell}} e^{-\frac{|x|}{p-1}} dx\right)^{\frac{p-1}{p}}\gtrsim {\ell^{-d}}(e^\ell-1)^{\frac1p},\]
	and since this estimate holds for any $\ell>1$, the left-hand-side cannot be bounded by any polynomial on $\ell$. That is, $\omega\notin A_p^{\rho,\theta}$ for any $\theta\geq 0$, so $\omega\notin A_p^{\rho}$ as we claimed.
	
	{In the case $p=1$, 
		\[\int_{B_{2\ell}} e^{|x|} dx\geq e^{2\ell}-e^\ell\geq e^\ell, \qquad \inf_{B_{2\ell}} e^{|x|}=1,\]
		so for every $\ell>1$,
		\[\fint_{B_{2\ell}} e^{|x|} dx\left(\inf_{B_{2\ell}} e^{|x|}\right)^{-1}\gtrsim \ell^{-d}e^\ell.\]
		As before, we cannot find any polynomial on $\ell$ that bounds above the left-hand side, which means $\omega\notin A_1^{\rho,\theta}$ for any $\theta\geq 0$.}
\end{proof}
As it was observed in \cite{BCH19}, the $A^\rho_p$ classes preserve the favorable properties of the $A_p$ weights. Similarly,  as the reader may verify, the $H^{\rho}_{p,c}$ classes also exhibit these properties.

\begin{lem}\label{lem: H properties} The following properties hold:
	\begin{enumerate}
		\item $H^{\rho,m}_{p,c}\subset H^{\rho,m}_{q,c}$ for every $1\leq p\leq q<\infty$ and $m,c\geq 0$.
		\item If $w\in H^{\rho,m}_{p,c}$ for some $1<p<\infty$ and $m\geq 0$, then $\sigma:=w^{1-p'}\in H^{\rho,m}_{p',c}$.
		\item If $w_1\in H^{\rho,m_1}_{1,c_1}$ and $w_2\in H^{\rho,m_2}_{1,c_2}$ for some $m_1,m_2,c_1,c_2\geq 0$, then for every $1\leq p<\infty$ there exist $m,c\geq 0$ such that $w_1 w_2^{1-p}\in H^{\rho,m}_{p,c}$.
	\end{enumerate}
\end{lem}

Next lemma states that $H^{\rho}_{p,c}$ weights satisfy the reverse H\"older condition given in Definition~\ref{def: RH classes} above. The proof follows the lines of  \cite[Lemma~5]{BHS11}, with some modifications that we will provide.

\begin{lem}\label{lem: H in RH}
	Given $1\leq p<\infty$ and $c>0$, for any $w \in H^{\rho}_{p,c}$, there exist constants  $\eta >1$ and $c^*>0$ such that $w\in RH^{\rho}_{\eta,c^*}.$ 
\end{lem}
\begin{proof}
	Since $w \in H^{\rho,m}_{p,c}$ for some $m \geq 0$, by Proposition \ref{prop: classes inclusions} we get $w \in A^{\rho,\loc}_{p}$. {Then, according to \cite[p.~574]{BHS11}, there exists $\eta>1$ such that
		\begin{equation*}
			\left(\frac{1}{|B|}\int_B w ^{\eta}\right)^{\frac{1}{\eta}}\lesssim \frac{1}{|B|}\int_B w
		\end{equation*}
		for every $B\in \mathcal{B}_\rho$. Thus, we have the estimate for any ball $B=B(x,r)$ with $r\leq \rho(x)$, and any $c^*,m^*\geq 0$.}

	{Assume now that $r>\rho(x)$.} We follow the proof given in \cite{BHS11}, where an important tool is their Proposition 2. It guarantees the existence of a sequence of points $\left\{x_j\right\}_{j \in \mathbb{N}}$$\subset \mathbb{R}^d$ that satisfies the following two properties:
	\begin{itemize}
		\item $\mathbb{R}^d= \bigcup_{j \in \mathbb{N}} B(x_j,\rho(x_j))$;
		\item there exist constants $C,N_1>0$ such that for every $\sigma \geq 1$, 
		\[\sum_{j \in \mathbb{N}}\chi_{B(x_j,\sigma\rho(x_j))}\leq C \sigma^{N_1}.\]
	\end{itemize} 
	When estimating the integral of $w^\eta$ on $B$, for $\eta>1$, the same arguments given in the proof of \cite[Lemma~5]{BHS11} can be applied to get that
	\begin{equation}\label{eq: average w eta}
		\left(\int_B w ^{\eta}\right)^{\frac{1}{\eta}}\lesssim w(c(x,r)B) \rho(x)^{-\frac{d }{\eta'}}\left(1+\frac{r}{\rho(x)}\right)^{\frac{dk_0}{\eta'}},
	\end{equation}
	where $c(x,r)={8}C_0^2\left(1+\frac{r}{\rho(x)}\right)^{k_0/(k_0+1)}$.
	
	In the context of our class of weights, we use Remark~ \ref{obs: H doubling} to obtain  	\begin{equation*}
		w(c(x,r)B) \lesssim \exp\left(\tilde{c}\left(1+ \frac{r}{\rho(x)}\right)^{\tilde{m}}\right) w(B),
	\end{equation*}
	where  {$\tilde{c}\leq cp(8C_0^4)^m+\frac{dp}{m}$} and $\tilde{m}=m\frac{k_0}{k_0+1}+m$. Therefore, {plugging this into \eqref{eq: average w eta}} we have \eqref{eq: RH exp} {with} {$c^*\leq \tilde{c}+\frac{d(k_0+1)}{\tilde{m}\eta'}$} and {$m^*=\tilde{m}$.}  
\end{proof}

We also have the following property for $RH^{\rho}_{\eta,c}$ families of weights, which gives us an openness property. It is important to clarify that in the hypotheses of the following lemma the parameters $c$ and $c_1$ are not necessarily the same, which broadens the result.
\begin{lem}
	Let $w \in H^{\rho}_{p,c}\cap RH^{\rho}_{\eta,c_1}$ for some $\eta> 1$, $c,c_1>0$ and $p>1$. There exist $\beta>1$  and $\widetilde{c}>0$ such that $w \in RH^{\rho}_{\eta\beta, \widetilde{c}}$.
\end{lem}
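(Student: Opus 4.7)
The plan is to split the argument by ball size: on sub-critical balls the exponential factor in \eqref{eq: ReverseHolder_exp} is harmless and a classical self-improvement applies, while for larger balls I will glue the sub-critical estimates together via a covering by critical balls, letting the exponential reappear through the doubling of $w$. More precisely, if $B=B(x,r)\in\mathcal{B}_\rho$ then $r\le\rho(x)$, so $\exp(c_1(1+r/\rho(x))^{m_1})\le e^{c_1 2^{m_1}}$ and $w$ satisfies a \emph{classical} reverse H\"older inequality with exponent $\eta$ and a uniform constant on all of $\mathcal{B}_\rho$. Since $w\in H^{\rho}_{p,c}\subset A^{\rho,\loc}_{p}$ by Proposition~\ref{prop: inclusion de clases}, $w$ has a classical doubling structure inside each sub-critical ball, so the usual Gehring self-improvement lemma produces $\beta>1$ and $C'>0$ with
\[
\left(\frac{1}{|B|}\int_B w^{\eta\beta}\right)^{\!1/(\eta\beta)}\le C'\,\frac{1}{|B|}\int_B w\qquad\text{for every }B\in\mathcal{B}_\rho.
\]

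For $B=B(x,r)$ with $r>\rho(x)$, I would then invoke the covering recalled in the proof of Lemma~\ref{lem: rev holder}: a sequence $\{x_j\}\subset\mathbb{R}^d$ with $\mathbb{R}^d=\bigcup_j B(x_j,\rho(x_j))$ and overlap $\sum_j\chi_{B(x_j,\sigma\rho(x_j))}\le C\sigma^{N_1}$. Setting $B_j:=B(x_j,\rho(x_j))\in\mathcal{B}_\rho$ and $J:=\{j:B_j\cap B\neq\emptyset\}$, a standard use of \eqref{eq: critical radius def} shows $B_j\subset\lambda B$ for each $j\in J$, with $\lambda\sim(1+r/\rho(x))^{k_0/(k_0+1)}$, together with a polynomial lower bound $\rho(x_j)\gtrsim\rho(x)(1+r/\rho(x))^{-N_2}$ for some $N_2>0$; the overlap estimate yields $|J|\le C\lambda^{N_1}$. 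Decomposing and using the sub-critical inequality on each $B_j$,
\[
\int_B w^{\eta\beta}\le\sum_{j\in J}\int_{B_j}w^{\eta\beta}\le C'^{\eta\beta}\sum_{j\in J}|B_j|^{1-\eta\beta}\!\left(\int_{B_j}w\right)^{\!\eta\beta}\le C'^{\eta\beta}\,w(\lambda B)^{\eta\beta}\sum_{j\in J}|B_j|^{1-\eta\beta},
\]
and I would absorb $w(\lambda B)$ into $w(B)$ via the doubling $w\in D^{\rho,m}_{p,cp}$ (Remark~\ref{obs: HincludeinD}) at the cost of a factor $\exp(\tilde c_1(1+r/\rho(x))^{\tilde m_1})$.

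The principal difficulty will be bookkeeping in this last step: the sum $\sum_{j\in J}|B_j|^{1-\eta\beta}$, the lower bound for $|B_j|$ relative to $|B|$, and the cardinality of $J$ each contribute a polynomial power of $1+r/\rho(x)$. This is ultimately harmless because any $(1+t)^N$ is dominated by $\exp(t)$ up to a constant, so every polynomial accumulation can be folded into a single exponential factor $\exp(\tilde c(1+r/\rho(x))^{\tilde m})$ by enlarging $\tilde c$ and $\tilde m$, yielding $w\in RH^{\rho}_{\eta\beta,\tilde c}$ as required.
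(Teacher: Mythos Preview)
Your approach is correct but genuinely different from the paper's. The paper proceeds structurally: from $w\in H^{\rho,m}_{p,c}\cap RH^{\rho,m_1}_{\eta,c_1}$ a direct computation (use the $RH$ hypothesis to replace the $\eta$-average of $w$ by the plain average, then apply the $H^{\rho}_{p,c}$ condition to the resulting product) shows $w^\eta\in H^{\rho}_{q,c_2}$ with $q=\eta(p-1)+1$; Lemma~\ref{lem: rev holder} applied to $w^\eta$ then gives $w^\eta\in RH^{\rho}_{\beta,c'}$ for some $\beta>1$, and concatenating with $w\in RH^{\rho}_{\eta,c_1}$ yields $w\in RH^{\rho}_{\eta\beta,\widetilde{c}}$. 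Thus the paper reduces the openness to the already-established Lemma~\ref{lem: rev holder} at the level of $w^\eta$, hiding all covering inside that lemma. Your route instead re-runs the geometry explicitly: local Gehring on $\mathcal{B}_\rho$, then propagation to large balls via the critical-ball covering and exponential doubling. This is more hands-on and carries more bookkeeping (the cardinality of $J$, the lower bound on $\rho(x_j)$, the negative power $|B_j|^{1-\eta\beta}$), but it makes transparent exactly where the exponential factor enters (only through the doubling $w(\lambda B)\lesssim w(B)\exp(\cdots)$), whereas the paper's argument is shorter because it recycles Lemma~\ref{lem: rev holder} as a black box. One simplification you can exploit in your super-critical step: since $r>\rho(x)$ forces $\rho(x_j)\lesssim\rho(x)(1+r/\rho(x))^{k_0/(k_0+1)}\lesssim r$ for every $j\in J$, the dilation factor $\lambda$ can in fact be taken as a fixed constant, which streamlines the doubling estimate.
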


\begin{proof}
	Let $w \in H^{\rho}_{p,c} \cap RH_{\eta,c_1}^\rho  $. Then, there exist  $m ,m_1> 0$ such that $w \in H^{\rho,m}_{p,c} \cap RH_{\eta,c_1}^{\rho, m_1}$.	
	Using both conditions we get $w^\eta  \in  H^{\rho}_{q,c_2} $ with $c_2>0$ and  $q = \eta (p-1)+1$. Indeed,
	\begin{align*}
		&\left(  \frac{1}{|B|} \int_B w ^\eta\right)^{\frac{1}{\eta(p-1)+1}}\left(\frac{1}{|B|}\int_B w ^{-\frac{\eta}{\eta(p-1)}}\right)^{\frac{\eta(p-1)}{\eta(p-1)+1}}\\
		& \lesssim \left( \exp \left(c_1\left(1+ \frac{r}{\rho(x)}\right)^{m_1}\right)\frac{1}{|B|}\int_B w\right)^{\frac{\eta}{\eta(p-1)+1}} \left(\frac{1}{|B|}\int_B w ^{-\frac{1}{(p-1)}}\right)^{\frac{\eta(p-1)}{\eta(p-1)+1}}\\
		& \lesssim  \exp \left(c_1\frac{\eta}{\eta(p-1)+1}\left(1+ \frac{r}{\rho(x)}\right)^{m_1}\right)\\
		&\quad \times \left(\left(\frac{1}{|B|}\int_B w\right)^{\frac{1}{p}}\left(\frac{1}{|B|}\int_B w ^{-\frac{1}{(p-1)}}\right)^{\frac{p-1}{p}}\right)^{\frac{\eta p}{\eta(p-1)+1}}\\
		&\lesssim \exp \left(c_1\frac{\eta}{\eta(p-1)+1}\left(1+ \frac{r}{\rho(x)}\right)^{m_1}\right)\exp\left( c\frac{\eta p}{\eta(p-1)+1} \left(1 + \frac{r}{\rho(x)}\right)^{m}\right)\\
		& \lesssim \exp \left(c_2\left(1+ \frac{r}{\rho(x)}\right)^{m_2}\right),
	\end{align*}
	with $c_2=\frac{(c_1+cp)\eta}{\eta(p-1)+1}>0$ and $m_2=\max\{m,m_1\}$.
	
	Then, from Lemma~\ref{lem: H in RH} there exists $\beta >1$ and $c'>0$ such that   $w^\eta \in RH^{\rho}_{\beta,c'}$. Using again that $w\in RH^\rho_{\eta,c_1}$, we get $w \in RH^\rho_{{\eta\beta},\widetilde{c}}$ with $\widetilde{c}=c_1+\frac{c}{\eta}>0$. 
\end{proof}

\subsection{Maximal operators associated with a critical radius function}\label{subsec: maximal operators}

For $f \in L^1_{\loc}\left(\mathbb{R}^d\right)$, we recall the definition of the local maximal operator over sub-critical balls, studied in \cite{BHS11} and \cite{BCH19}:
\begin{equation}
	M^{\loc}_\rho f(x)=\sup_{x \in B\in \mathcal{B}_\rho}\frac{1}{|B|}\int_B|f(y)|dy.
\end{equation}
As it was proved in \cite[Theorem~1]{BHS11}, for any $1<p<\infty$, $w\in A^{\rho,\loc}_p$ if and only if $M^{\loc}_\rho$ is bounded on $L^p(w)$.

In a similar way, the classes of weights $H_{p,c}^\rho$ are related with the boundedness of certain maximal functions that capture the exponential growth. These operators were already considered by J. Bailey in \cite{Bailey21}, but we define them below for the sake of completeness.

\begin{defn}
	Let $c, m\geq 0$ and let $\rho$ be a critical radius function. For $ f \in L^1_{\loc}(\mathbb{R}^d)$ and $x \in \mathbb{R}^d$ we consider the maximal operator given by
	\begin{equation}
		\widetilde{\mathcal{M}}^{m}_{\rho,c} f(x):=\sup_{B(x',r)\ni x}\frac{1}{\exp{\left(c\left(1+\frac{r}{\rho(x')}\right)^m\right)}}\fint_{B(x',r)}|f(y)|dy,
	\end{equation}
	and its centered version
	\begin{equation}
		\widetilde{M}^{m}_{\rho,c}f(x):=\sup_{r>0}\frac{1}{\exp{\left(c\left(1+\frac{r}{\rho(x)}\right)^m\right)}}\fint_{B(x,r)}|f(y)|dy.
	\end{equation}
\end{defn}

For any $x \in \mathbb{R}^d$ and $f \in L^{1}_{\loc}(\mathbb{R}^d)$, the inequality $\widetilde{M}^{m}_{\rho,c}f \lesssim \widetilde{\mathcal{M}}^{m}_{\rho,c}f$ trivially holds. The following proposition states that the reverse inequality also holds under some restrictions on the parameters. 

\begin{prop}[{\cite[Proposition~3.5]{Bailey21}}]\label{prop: maximal comparison}
	Let $\rho$ be a critical radius function with constants $C_0$ and $k_0$ as in Definition~\ref{def: critical radius function}. Given $c_1,c_2,m_1,m_2>0$ with   $m_1\geq (k_0+1)m_2$ and $c_1\geq c_2(2C_0)^{m_2}$ we have
	\begin{equation}\label{eq: maximal comparison}
		\widetilde{\mathcal{M}}_{\rho,c_1}^{m_1} f(x)\lesssim \widetilde{M}_{\rho,c_2}^{m_2} f(x)
	\end{equation}
	for every $f \in L^1_{\loc}(\mathbb{R}^d)$ and $x \in \mathbb{R}^d$. 
\end{prop}

The theorem below is significant in its own right, since it allows us to characterize the weight classes {$H^{\rho,m}_{p,c}$} as those that ensure the boundedness of the maximal operators $\widetilde{M}^{m}_{\rho,c}$, {but not necessarily with the same parameters $c$ and $m$.} This result will be useful to give an extrapolation tool in \S\ref{sec: proofs main results}.

{\begin{thm}\label{eq: maximal bounded Lpw}
		Let $1<p<\infty$. 
		\begin{enumerate} 
			\item If $w \in H^{\rho,m_1}_{p,c_1}$ for some $c_1,m_1\geq 0$, then  $\widetilde{M}^{m_1}_{\rho,c_2}$ is bounded on $L^p(w)$ for every $c_2 > c_ 1(8C_0)^{m_1}$.
			\item If $\widetilde{M}^{m_2}_{\rho,c_2}$ is bounded on $L^p(w)$ for some $c_2,m_2\geq 0$, then $w \in H^{\rho,m_1}_{p,c_1}$ for any $m_1\geq (k_0+1)m_2$ and $c_1\geq c_2(2C_0)^{m_2}$.
		\end{enumerate}
\end{thm}}

\begin{proof} The necessity of the class {$H^{\rho,m_1}_{p,c_1}$} was already stated in \cite[Proposition~3.6]{Bailey21} using \eqref{eq: maximal comparison}. We shall prove the sufficiency.

	{We fix $c_1,m_1\geq 0$ such that $w \in H^{\rho,m_1}_{p,c_1}$, and take $c_2\geq 0$ to be chosen later.}
	
	We can split the maximal operator as follows, 
	\begin{equation}
		\widetilde{M}^{m_1}_{\rho,c_2}f(x)\leq \widetilde{M}^{m_1,(1)}_{\rho,c_2}f(x)+\widetilde{M}^{m_1,(2)}_{\rho,c_2}f(x),
	\end{equation}
	where
	\begin{equation*}
		\displaystyle \widetilde{M}^{m_1,(1)}_{\rho,c_2}f(x)=\sup_{r\leq \rho(x)}\frac{1}{\exp \left(c_2\left(1+\frac{r}{\rho(x)}\right)^{m_1}\right)}\fint_{B(x,r)}|f(y)|dy
	\end{equation*}  
	and 
	\begin{equation*}
		\widetilde{M}^{m_1,(2)}_{\rho,c_2}f(x)=\sup_{r>\rho(x)}\frac{1}{\exp \left(c_2\left(1+\frac{r}{\rho(x)}\right)^{m_1}\right)}\fint_{B(x,r)}|f(y)|dy.
	\end{equation*}
	
	By Proposition~\ref{prop: classes inclusions}, ${H^{\rho,m_1}_{p,c_1}} \subset A^{\rho,\loc}_p$, and since  $\widetilde{M}^{m_1,(1)}_{\rho,c_2}f(x)\leq  M^{\loc}_\rho f(x)$ {for any $c_2\geq 0$}, we deduce that $\widetilde{M}^{m_1,(1)}_{\rho,c_2}$ is bounded on $L^p(w)$ for any {$c_2\geq 0$ and $w \in H^{\rho,m_1}_{p,c_1}$.}

	We will now deal with $\widetilde{M}^{m_1,(2)}_{\rho,c_2}f$. Let $\left\{Q_k\right\}_{k\geq 1}$ be the covering given in \cite[Proposition~2]{BHS11} of critical balls $Q_k=B(x_k,\rho(x_k))$. We fix $x\in \mathbb{R}^d$ and set $R_j=\left\{ r:2^{j-1}\rho(x)<r\leq 2^j\rho(x)\right\}$, {$j\geq 1$}. Then, for any $x\in Q_k$,  {from Definition~\ref{def: critical radius function} is $\rho(x_k)\leq C_02^{k_0}\rho(x)$} so we have
	\begin{align*}
		\widetilde{M}^{m_1,(2)}_{\rho,c_2}f(x)&=\sup_{r>\rho(x)}\frac{1}{\exp \left(c_2\left(1+\frac{r}{\rho(x)}\right)^{m_1}\right)}\fint_{B(x,r)}|f(y)|dy\\
		& =\sup_{j\geq1}\sup_{r \in R_j}\frac{1}{\exp \left(c_2\left(1+\frac{r}{\rho(x)}\right)^{m_1}\right)\omega_d r^d}\int_{B(x,r)}|f(y)|dy\\
		& \lesssim \sup_{j\geq1}\frac{1}{\exp \left(c_2\left(1+2^{j-1}\right)^{m_1}\right)}\frac{1}{\left(2^{j-1}\rho(x)\right)^d}\int_{B(x,2^j\rho(x))}|f(y)|dy\\
		&\lesssim \sup_{j\geq1}\frac{2^{-jd}}{\exp \left(c_2\left(1+2^{j-1}\right)^{m_1}\right)\rho(x_k)^d}\int_{C_jQ_k}|f(y)|dy,
	\end{align*}
	where {$C_j=2^{j+2} C_0$} can be obtained from \eqref{eq: critical radius ineq}. 
	
	Finally, from H\"older's inequality and {the properties of the covering}, we get
	\begin{align*}
		\int_{\mathbb{R}^d}\left|\widetilde{M}^{m_1,(2)}_{\rho,c_2}f\right|^p w 
		&\leq \sum_{k\geq 1}\int_{Q_k} \left|\widetilde{M}^{m_1,(2)}_{\rho,c_2}f\right|^p w \\
		&\leq \sum_{k\geq 1} \sup_{j\geq1}\frac{2^{-jdp}}{\exp \left(c_2p\left(1+2^{j-1}\right)^{m_1}\right)\rho(x_k)^{dp}}\left(\int_{C_jQ_k}|f|\right)^p \left(\int_{Q_k}w\right)\\
		& \leq \sum_{k\geq 1} \sup_{j\geq1}\frac{2^{-jdp}}{\exp \left(c_2\left(1+2^{j-1}\right)^{m_1}\right)\rho(x_k)^{dp}}\left(\int_{C_jQ_k}|f|^p w \right)\left(\int_{C_jQ_k}w^{-\frac{1}{p-1}}\right)^{p/p'}\int_{C_jQ_k}w\\
		& \leq \sum_{k\geq 1} \sup_{j\geq1}\frac{2^{-jdp}\left(C_j \rho(x_k)\right)^{dp} \exp\left(c_1p\left(1+\frac{C_j\rho(x_k)}{\rho(x_k)}\right)^{m_1} \right)}{\exp \left(c_2p\left(1+2^{j-1}\right)^{m_1}\right)\rho(x_k)^{dp}}\int_{C_jQ_k}|f|^p w\\
		& \leq {\left(4C_0\right)^{dp}}\sum_{k\geq 1} \sup_{j\geq1}\frac{ \exp\left(c_1p\left(1+{2^{j+2}C_0}\right)^{m_1} \right)}{\exp \left(c_2p\left(1+2^{j-1}\right)^{m_1}\right)}\int_{C_jQ_k}|f|^p w\\
		&\lesssim \sum_{j\geq 1}{\exp\left(\tilde{c}\left(1+2^{j}\right)^{m_1}\right)}\sum_{k\geq 1}\int_{C_jQ_k}|f|^pw,
	\end{align*}
	{where $\tilde{c}= c_1 p(8C_0)^{m_1} - c_2p$.}
	
	Since $\sum_{k\geq 1} \chi_{C_jQ_k}\leq C\, C_j^{N_1}\sim C2^{jN_1}$, 
	\begin{equation*}
		\int_{\mathbb{R}^d}\left|\widetilde{M}^{m_1,(2)}_{\rho,c_2}f\right|^p w 
		\lesssim \int_{\mathbb{R}^d} |f|^p w \sum_{j\geq 1}{\exp\left(\tilde{c}\left(1+2^{j}\right)^{m_1}\right)}2^{jN_1}\lesssim \int_{\mathbb{R}^d} |f|^p w,
	\end{equation*}
	provided that {$c_2 > c_ 1(8C_0)^{m_1}$} is chosen. 
\end{proof}

{\begin{rem}\label{obs: maximal p=1}
		For $p=1$, we have that if $w$ is a weight verifying $\widetilde{\mathcal{M}}_{\rho,c}^{m}w\lesssim w$ a.e., then $w\in H_{1,c}^{\rho,m}$. Note that the parameters $c$ and $m$ are preserved. This can be deduced by adapting the classical argument for the Hardy-Littlewood maximal operator and $A_1$ weights. 
\end{rem}}

\section{Boundedness results for exponential Schr\"odinger--Calder\'on--Zygmund operators} \label{sec: proofs main results}

We first present useful tools for proving our main theorem. While many of them follow the approach outlined in \cite{BCH19}, additional considerations may be necessary in this new context. For this reason, we will provide details whenever clarification is required.

{\begin{thm}\label{thm: RdF extrapolation H weights}
		Let $q>0$ and let $(f,g)$ be a pair of measurable and nonnegative functions such that
		\begin{equation}\label{eq: weighted Linfty ineq}
			\|fw\|_\infty \leq C\|gw\|_\infty
		\end{equation}
		holds for every weight $w$ such that $w^{-q}\in H_{1,c_1}^{\rho,m_1}$ for certain $c_1,m_1\geq 0$, whenever the left hand side is finite, with a constant $C$ depending on $w$ only through the constant of the class $H_{1,c_1}^{\rho,m_1}$. 
		
		Then, for any $q<p<\infty$,
		\[\|f\|_{L^p(w)}\leq C\|g\|_{L^p(w)}\]
		for every $w\in H_{p/q,c}^{\rho, m}$ with $m=\frac{m_1}{k_0+1}$ and $c<c_1 (4C_0)^{-2m}$, whenever the left hand side is finite, with a constant $C$ depending on $w$ only through the constant of the class $H_{p/q,c}^{\rho,m}$.
\end{thm}}

{\begin{proof}
		Fix $1<p<\infty$ and assume first $q=1$. Without loss of generality, we can suppose $f,g\in L^p(w)$. Then, from \cite[Lemma~1]{BCH13Extrapol} (see also \cite{HMS}), there exist positive functions $F,G\in L^p(w^{-1/(p-1)})$ such that
		\[\|F\|_{L^p(w^{-1/(p-1)})}\leq 2, \quad \|G\|_{L^p(w^{-1/(p-1)})}\leq 2\]
		and
		\[\|f\|_{L^p(w)}=\|fw^{1/(p-1)}F^{-1}\|_{\infty},\quad \|g\|_{L^p(w)}=\|gw^{1/(p-1)}G^{-1}\|_{\infty}.\]
		
		Let $w\in H^{\rho,m}_{p,c}$ with $m=\frac{m_1}{k_0+1}$ and $c<c_1 (4C_0)^{-2m}$. We will proceed similarly to the proof of \cite[Theorem~3]{BCH13Extrapol}, but we will give the essential changes. Let $h=F+G$ and define
		\[\mathscr{R}h(x)=\sum_{k=0}^{\infty} \frac{\widetilde{T}_{c_1,m_1}^k h(x)}{2^k\|\widetilde{T}_{c_1,m_1}\|_{L^p(w^{-1/(p-1)})}^k},\]
		where 
		\[\widetilde{T}_{c_1,m_1}f=\widetilde{\mathcal{M}}_{\rho,c_1}^{m_1}(fw^{-1/(p-1)})w^{1/(p-1)},\]
		which is a bounded operator on $L^p(w^{-1/(p-1)})$ since $\widetilde{\mathcal{M}}_{\rho,c_1}^{m_1}$ is bounded on $L^p(w)$. Indeed, as $m_1=m(k_0+1)$ and taking $c_2=c_1(2C_0)^{-m}$, $\widetilde{\mathcal{M}}_{\rho,c_1}^{m_1}\lesssim \widetilde{M}_{\rho,c_2}^{m}$ by Proposition~\ref{prop: maximal comparison}. And, noting that $c_2>c(8C_0)^m$, Theorem~\ref{eq: maximal bounded Lpw} gives the desired $L^p(w)$-boundedness of $\widetilde{\mathcal{M}}_{\rho,c_1}^{m_1}$.
		
		The function $\mathscr{R}h$ verifies 
		\[h\leq \mathscr{R}h, \quad \|\mathscr{R}h\|_{L^p(w^{-1/(p-1)})}\leq 2\|h\|_{L^p(w^{-1/(p-1)})}\]
		and 
		\[\widetilde{T}_{c_1,m_1}(\mathscr{R}h)\leq 2\|\widetilde{T}_{c_1,m_1}\|_{L^p(w^{-1/(p-1)})}\mathscr{R}h,\]
		where this last inequality implies that $(\mathscr{R}h) w^{-1/(p-1)}$ is a weight in $H_{1,c_1}^{\rho,m_1}$ by Remark~\ref{obs: maximal p=1} and the constant $2\|\widetilde{T}_{c_1,m_1}\|_{L^p(w^{-1/(p-1)})}$ is not smaller than the constant of $(\mathscr{R}h) w^{-1/(p-1)}$ on the corresponding class of weights.
		
		The proof can be concluded as the proof of \cite[Theorem~3]{BCH13Extrapol}.
		
		If $q>0$, we take $w^{-1}\in H_{1,c_1}^{\rho,m_1}$, so we can use the hypothesis with $w^{1/q}$ since $(w^{1/q})^{-q}\in H_{1,c_1}^{\rho,m_1}$. As in the proof of \cite[Corollary~4]{BCH13Extrapol}, we apply the result obtained for the case $q=1$ to the pairs $(f^q,g^q)$ to get
		\[\|f\|_{L^p(w)}\leq C\|g\|_{L^p(w)}\]
		for any $q<p<\infty$ and $w\in H_{p/q,c}^{\rho,m}$ with $c$ and $m$ as before.
\end{proof}}

In order to get the boundedness on $L^p(w)$ for exponential Schr\"odinger--Calder\'on--Zygmund operators, we state an extrapolation result from the extreme point $p_0=\infty$. Since many of the operators arising on the Schr\"odinger context are not bounded on $L^\infty$, the following result will allow us to include them by establishing their continuity from $L^\infty(w)$ to $\BMO(w)$, for certain classes of weights. 

{\begin{thm}\label{thm: extrapolation H weights}
		Let $q>0$ and let $T$ be a bounded operator from $L^\infty(w)$ into $BMO_\rho(w)$ for any weight $w$ such that $w^{-q}\in H^{\rho,m_1}_{1,c_1}$ for certain $c_1,m_1\geq 0$, with constant depending on $w$ through the constant of the condition   $w^{-q}\in H_{1,c_1}^{\rho, m_1}$. Then, $T$ is bounded on $L^p(w)$ for every $q<p<\infty$ and every $w\in H_{p/q,c}^{\rho,m}$ where $m=\frac{m_1}{k_0+1}$ and $c<c_1(4C_0)^{-2m}$. 
	\end{thm}
	
	\begin{proof}
		We  first observe that from Lemma~\ref{lem: BMO Msharp} and the hypothesis on $T$, for every $f\in L^\infty(w)$ we have 
		\begin{equation*}
			\| M_{\loc}^\sharp(Tf) w\|_\infty \leq \|Tf\|_{\BMO(w)}\leq C \|f\|_{L^\infty(w)}=C\|fw\|_\infty
		\end{equation*}
		whenever $w$ is a weight such that $w^{-q}\in H_{1,c_1}^{\rho,m_1}$. This means that the pair $(M^\sharp_\loc(Tf),f)$ verifies \eqref{eq: weighted Linfty ineq} for every $w^{-q}\in H_{1,c_1}^{\rho,m_1}$. Therefore, by Theorem~\ref{thm: RdF extrapolation H weights}
		\[\|M_{\loc}^\sharp(Tf) w\|_{L^p(w)}\leq C \|f w\|_{L^p(w)}\]
		for every $w\in H_{p/q,c}^{\rho,m}$ with $m=\frac{m_1}{k_0+1}$ and $c<c_1(4C_0)^{-2m}$, where the constant $C$ depends only on the constant of the weight on its class. 
		
		Finally, Proposition \ref{prop: classes inclusions} allows us to apply \cite[Corollary 5]{BCH13Extrapol} to finish the proof. 
\end{proof}}

The next two propositions show that, for certain parameters, the exponential Schr\"odinger--Calder\'on--Zygmund operators of $(s,\delta)$ type for   $1<s\leq \infty$ and $0<\delta\leq 1$, fall under the hypothesis of the theorem above.

\begin{prop}\label{prop: endpoint infty-delta}
	Let $T$ be an exponential Schr\"odinger--Calder\'on--Zygmund operator of $(\infty,\delta)$ type as given in Definition \ref{def: infty-delta type} with constants $c_1$ and $m_1$ in \eqref{eq: size pointwise}. Then, $T$ is bounded from $L^\infty(w)$ into $BMO_\rho(w)$ for every weight $w$ such that $w^{-1}\in H_{1,c}^{\rho,m_1}$ with $c<c_1{2^{-m_1}}$. 
\end{prop}

\begin{proof} 
	Let $x_0\in \mathbb{R}^d$, $r\leq \rho(x_0)$ and $B=B(x_0,r)$. We consider 
	\begin{equation}\label{eq: f1f2f3}
		f=f\chi_{2B} + f\chi_{B(x_0,2\rho(x_0))\setminus 2B} + f\chi_{B(x_0,2\rho(x_0))^c}=: f_1+f_2+f_3.
	\end{equation}
	Here, we need to observe that as  $w^{-1}\in H_{1,c}^{\rho,m_1}$, from Lemma \ref{lem: H in RH} there exist $\eta>1$ and 
	$c^*>0$ such that $w\in RH^{\rho}_{\eta,c^*}.$
	Therefore there exist a constant $C$ depending on $m_1$ and $c^*$ such that  
	\[\left(\frac{1}{|B|}\int_B w^{-\gamma}\right)^{1/\gamma}\leq C\frac{1}{|B|}\int_B w^{-1}\]
	since  $r\leq \rho(x_0)$. 
	
	Using that $T$ is bounded on $L^\gamma(\mathbb{R}^d)$, 
	and that $w^{-1}\in A_1^{\rho,\loc}$ (by Proposition~\ref{prop: classes inclusions}), we have 
	\begin{align*}
		\frac{1}{|B|} \int_B |T f_1(x)| dx
		&\leq 
		\left( \frac{1}{|B|} \int_B |T f_1(x)|^\gamma dx\right)^{1/\gamma}
		\\
		&\leq  C \left( \frac{1}{|B|} \int_{2B} | f(x)|^\gamma dx\right)^{1/\gamma}
		\\
		&\leq  C \| f w\|_\infty \left( \frac{1}{|2B|} \int_{2B}  w^{-\gamma}(x) dx\right)^{1/\gamma}
		\\
		&\leq  C \| f w\|_\infty \left( \frac{1}{|B|} \int_{B}  w^{-1}(x) dx\right)
		\\
		& \leq C \| f w\|_\infty \inf_{B} w^{-1}\\
		& =
		C \frac{\| f w\|_\infty}{\|  w\chi_B\|_\infty}.
	\end{align*}
	
	To estimate $|Tf_3(x)|$ for $x\in B$, we denote by $B_\rho=B(x_0,\rho(x_0))$ and  $B_\rho^k=2^k B_\rho$ for each $k\in \mathbb{N}$, {$k\geq 2$.} Thus	 
	\begin{align*}
		\int_{B_\rho^{k+1}}    |f| &\leq \| fw\|_{\infty} \int_{B_\rho^{k+1}} w^{-1} \leq C
		\exp\left(c \left(1+ 2^{k+1}\right)^{m_1}\right) |B_\rho^{k+1}|\frac{\| fw\|_{\infty}}{\|w\chi_{B_\rho^{k+1}}\|_{\infty}}
	\end{align*}
	for each $k\in \mathbb{N}$. Then, by the size condition \eqref{eq: size pointwise} on the kernel $K$ with $c_1$ and $m_1$, applying the above inequality and {Definition~\ref{def: critical radius function}}, we get
	\begin{align*}
		|Tf_3(x)| &\leq \sum_{{k\geq 2}} \int_{B_\rho^{k+1}\setminus B_\rho^k}|K(x,z)|| f(z)| dz
		\\
		& \leq C \sum_{{k\geq 2}} \int_{B_\rho^{k+1}\setminus B_\rho^k}   \frac{1}{|B_\rho^{k-1}|}\, \exp\left(-c_1 \left(1+ \frac{2^{k-1}\rho(x_0)}{\rho(x)}\right)^{m_1}\right) |f(z)| dz 
		\\
		& \leq C \sum_{{k\geq 2}} \exp\left(-c_1 \left(1+ 2^{{k-1}}C_02^{k_0}\right)^{m_1}\right)\frac{1}{|B_\rho^{k+1}|} \int_{B_\rho^{k+1}}    |f(z)| dz 
		\\
		& \leq C \sum_{{k\geq 2}}  \exp\left(-\left({\frac{c_1}{2^{m_1}}}-c\right) \left(1+ 2^{k+1}\right)^{m_1}\right)\frac{\| fw\|_{\infty}}{\|w\chi_{B_\rho^{k+1}}\|_\infty}
		\\
		& \leq C \frac{\| fw\|_{\infty}}{\|w\chi_{B}\|_\infty}\sum_{{k\geq 2}}  \exp\left(-\left({\frac{c_1}{2^{m_1}}}-c\right) \left(1+ 2^{k+1}\right)^{m_1}\right)
	\end{align*}
	where the series converges since $c<c_1{2^{-m_1}}$.
	
	The bound for $f_2$ follows as in the proof of \cite[Proposition~5]{BCH13Extrapol}, since only the smoothness condition is required, and the operator $T$ under consideration satisfies \cite[(41)]{BCH13Extrapol}, which is precisely \eqref{eq: smooth pointwise}.
\end{proof}

\begin{prop}\label{prop: endpoint s-delta}
	Let $T$ be an exponential Schr\"odinger--Calder\'on--Zygmund operator of $(s,\delta)$ type as given in Definition \ref{def: s-delta type} with constants $c_1$ and $m_1$ in \eqref{eq: size Hormander}. Then, $T$ is bounded from $L^\infty(w)$ into $\BMO(w)$ for every weight $w$ such that $w^{-s'}\in H_{1,c}^{\rho, m_1}$ with $c<s'c_1{2^{-m_1}}$. 
\end{prop}

\begin{proof}
	Let $x_0\in \mathbb{R}^d$, $r\leq \rho(x_0)$ and $B=B(x_0,r)$. We consider $f_1,f_2$ and $f_3$ as in \eqref{eq: f1f2f3}.
	
	Here, we need to observe that as  $w^{-s'}\in H_{1,c}^\rho$ and $r\leq \rho(x_0)$, the estimates of the average $\frac{1}{|B|} \int_B |Tf_1| dx$ follows using Kolmogorov's inequality and the hypothesis on $w$ as in {the proof of \cite[Proposition~6]{BCH13Extrapol}}. Then we have  
	\begin{align*}
		\frac{1}{|B|} \int_B |T f_1(x)| dx
		\leq 
		C \frac{\| f w\|_\infty}{\|  w\chi_B\|_\infty}.
	\end{align*}
	
	To estimate $|Tf_3(x)|$ for $x\in B$, we denote, as in the previous proof, $B_\rho=B(x_0,\rho(x_0))$ and $B_\rho^k=2^k B_\rho$ for $k\in \mathbb{N}${, $k\geq 2$}. From the condition on the weight we can obtain	
	\begin{align*}
		\left(\int_{B_\rho^{k+1}}    |f|^{s'}\right)^{1/s'} &\leq \| fw\|_{\infty} \left(\int_{B_\rho^{k+1}} w^{-s'} \right)^{1/s'}\leq 
		C\exp\left(\frac{c}{s'} \left(1+ 2^{k+1}\right)^{m_1}\right) \left|B_\rho^{k+1}\right|^{1/s'}\frac{\| fw\|_{\infty}}{\|w\chi_{B_\rho^{k+1}}\|_{\infty}}.
	\end{align*}
	From this inequality and \eqref{eq: size Hormander} with $c_1,m_1>0$, we estimate	
	\begin{align*}
		|Tf_3(x)| &\leq \sum_{{k\geq 2}} \int_{B_\rho^{k+1}\setminus B_\rho^k}|K(x,z)|| f(z)| dz
		\\
		& \leq \sum_{{k\geq 2}} \left( \int_{B_\rho^{k+1}\setminus B_\rho^k}|K(x,z)|^s dz \right)^{1/s} \left(  \int_{B_\rho^{k+1}} |f(z)|^{s'} dz\right)^{1/s'}
		\\
		& \leq C\sum_{{k\geq 2}} \rho(x_0)^{-d/s'} \exp\left(-c_1 \left(1+2^{{k}}\right)^{m_1}\right) 	\exp\left(\frac{c}{s'} \left(1+ 2^{k+1}\right)^{m_1}\right)\left|B_\rho^{k+1}\right|^{1/s'}\frac{\| fw\|_{\infty}}{\|w\chi_{B_\rho^{k+1}}\|_\infty}
		\\
		& \leq C\frac{\| fw\|_{\infty}}{\|w\chi_{B_\rho}\|_\infty} \sum_{{k\geq 2}} \exp\left(-\left({\frac{c_1}{2^{m_1}}}-\frac{c}{s'}\right) \left(1+ 2^{k+1}\right)^{m_1}\right)
	\end{align*}
	where the series converges as $c<s'c_1{2^{-m_1}}$.
	
	The bound for $f_2$ can be obtained as in the proof of \cite[Proposition~6]{BCH13Extrapol} since \eqref{eq: smooth Hormander} implies \cite[(45)]{BCH13Extrapol}.
\end{proof}

\section{Applications for operators associated to \texorpdfstring{$-\Delta +\mu$}{-Δ+μ}}

We consider a Schr\"odinger operator with measure $\mu$ in $\mathbb{R}^d$, with $d\geq 3$, 
\[\mathcal{L}_\mu=-\Delta +\mu, \]
where $\mu$ is a nonnegative Radon measure on $\mathbb{R} ^d$ that satisfies the following conditions: there exist constants   $\delta_\mu, C_\mu>0$ and $D_\mu\geq 1$ such that
\begin{align}\label{eq: prop 1 mu}
	\mu(B(x,r)) \leq C_\mu \left( \frac{r}{R}\right)^{d-2+\delta_\mu} \mu(B(x,R))
\end{align}
and 
\begin{align}\label{eq: prop 2 mu}
	\mu(B(x,2r))\leq D_\mu \left(\mu(B(x,r))+r ^{d-2}\right)
\end{align}
for all $x \in \mathbb{R}^d$ and $0<r<R$.

From  \eqref{eq: prop 1 mu} and  \eqref{eq: prop 2 mu}  it can be proved that 
\begin{equation}\label{eq: prop 1 integral mu}
	\int_{B(x,R)}\frac{d\mu(y)}{| y-x|^{d-2} }\leq C \frac{\mu(B(x,R))}{R^{d-2}}, 
\end{equation} 
and if $\delta_\mu>1$ then we also have 
\begin{equation}\label{eq: prop 2 integral mu}
	\int_{B(x,R)}\frac{d\mu(y)}{| y-x| ^{d-1}}\leq C\frac{\mu(B(x,R))}{R^{d-1}}, 
\end{equation}
for all $x \in \mathbb{R}^d$ and $R>0$.

For any nonnegative Radon measure $\mu$ on $\mathbb{R} ^d$ verifying conditions \eqref{eq: prop 1 mu} and \eqref{eq: prop 2 mu}, the function given by
\begin{equation}\label{eq: rho_mu}
	\rho_\mu(x):=\sup \left\{ r>0: \frac{\mu(B(x,r))}{r^{d-2}}\leq 1\right\}, \quad x \in \mathbb{R}^d,
\end{equation}
is a critical radius function (see Definition~\ref{def: critical radius function}). 

It is immediate from the definition of $\rho_\mu$ that for every $x \in \mathbb{R}^d$
\begin{equation} \label{eq: mu equiv critical balls}
	\frac{\mu(B(x,\rho_\mu(x)))}{\rho_\mu(x)^{d-2}}\sim 1.
\end{equation}
Now, we shall apply Propositions~\ref{prop: endpoint infty-delta} and~\ref{prop: endpoint s-delta} to some operators related to~$\mathcal{L}_\mu$.

It is worth noticing that, as a particular case of measure $\mu$ we will be able to consider $ {d\mu(x)=V(x) dx}$, with potential $V\geq 0$ in the reverse H\"older class $RH_{\frac{d}{2}}$, that is,
\begin{equation*}
	\left(\frac{1}{|B(x,r)|} \int_{B(x,r)} V(y)^{\frac d2 }dy\right)^{\frac 2d} \leq C \frac{1}{|B(x,r)|} \int_{B(x,r)} V(y)dy,
\end{equation*}
for every $x\in \mathbb{R}^d$ and $r>0$. Under this condition it is easy to see that 	$\mu$ satisfy \eqref{eq: prop 1 integral mu} and \eqref{eq: prop 2 integral mu} for some $\delta_\mu,C_\mu>0$ and $D_\mu\geq 1$.	In this context, we recover the operators studied in \cite{Shen95}.

Hereafter, we denote by  $\Gamma_\mu$ the fundamental solution of  $\mathcal{L}_\mu$ and by $\Gamma_{\mu+\lambda}$ the fundamental solution of $\mathcal{L}_{\mu+\lambda}=-\Delta + (\mu+\lambda)$ for $\lambda\geq 0$. As observed in \cite[p.~554]{Shen99}, the measure $\mu+\lambda$ verifies \eqref{eq: prop 1 mu} and \eqref{eq: prop 2 mu} for any $\lambda\geq 0$. Also, we may write $d_{\mu}$ and $d_{\mu+\lambda}$ to indicate the Agmon distances associated with the critical radius functions $\rho_\mu$ and $\rho_{\mu+\lambda}$ defined as in \eqref{eq: rho_mu} for the measures $\mu$ and $\mu+\lambda$, respectively. For the sake of simplicity, we shall henceforth write $\rho$ instead of $\rho_\mu$, with the understanding that the reference to $\mu$ will be clear from the context.

We refer to \cite[Theorems~0.8~and~0.17]{Shen99} for the following results.	Actually,  although the estimate \eqref{eq: grad fundamental solution bound} given below is not explicitly stated in \cite{Shen99}, it
is essentially contained within the proof of \cite[Theorem~0.19]{Shen99} (see also \cite[Theorem~4.2]{Bailey21}).

For a measure $\mu$ satisfying  \eqref{eq: prop 1 mu} and \eqref{eq: prop 2 mu}, there exist positive constants $C_1, C_2, C_ 3, \epsilon_1, \epsilon_2$ and $\epsilon_3$ such that 
\begin{equation}\label{eq: fundamental solution bounds}
	C_1 \frac{e^{-\epsilon_1 d_{\mu}(x,y)}}{|x-y|^{d-2}}\leq
	\Gamma_{\mu}(x,y)\leq C_2 \frac{e^{-\epsilon_2 d_{\mu}(x,y)}}{|x-y|^{d-2}}, 
\end{equation}
and 	
\begin{equation}\label{eq: grad fundamental solution bound}
	|\nabla_1\Gamma_{\mu}(x,y)|\leq C_3
	\frac{e^{-\epsilon_3 d_\mu(x,y)}}{|x-y|^{d-2}}\left(\int_{B \left(x,\frac{|y-x|}{2}\right)}\frac{d\mu(z)}{|z-x|^{d-1}}+ \frac{1}{|x-y|}\right)
\end{equation}
for $x\neq y $. Also, for $\delta_\mu>1$ we have that there exist $C,\epsilon>0$ such that 
\begin{equation*}
	| \nabla_1\Gamma_{\mu}(x,y)|\leq C \frac{e^{-\epsilon d_{\mu}(x,y)}}{|x-y|^{d-1}}, \quad \mbox{ for }  x\neq y. 
\end{equation*}

The following technical lemmas  will be  important tools for the examples below. 

\begin{lem}\label{lem: drho lower bound}
	Let $\rho:\mathbb{R} \rightarrow [0,\infty)$ be a critical radius function as in Definition~\ref{def: critical radius function}. Then, there exists a constant $C>0$, depending on $D_0, D_1$ and $k_0$, such that 
	\begin{equation}\label{eq: drho lower bound}
		d_\rho(x,y)\geq D_{1}^{-1}\left(1+\frac{|x-y|}{\rho(x)}\right)^{\frac{1}{k_0+1}}+D_0^{-1}\left(1+\frac{\rho(x)}{|x-y|}\right)^{-1}-C.
	\end{equation}
\end{lem}
\begin{proof} We will consider two cases. If $|x-y|\leq 2\rho(x)$, by Lemma~\ref{eq: drho equiv usual dist} we have
	\[d_\rho(x,y)\geq D_0^{-1}\frac{|x-y|}{\rho(x)}.\]
	Since 
	\[\frac{|x-y|}{\rho(x)}\geq \frac{|x-y|}{\rho(x)+|x-y|}=\left(1+\frac{\rho(x)}{|x-y|}\right)^{-1},\]
	we get
	\begin{equation*}
		d_\rho(x,y)\geq D_0^{-1}\left(1+\frac{\rho(x)}{|x-y|}\right)^{-1}.
	\end{equation*}
	As $1+\frac{|x-y|}{\rho(x)}\leq 3$, we can take $C\geq D_1^{-1}3^{\frac{1}{k_0+1}}$ to get the desired inequality in this case.

	On the other hand,  if $|x-y|>2\rho(x)$ we can apply Lemma~\ref{lem: drho comparable local} to get
	\[d_\rho(x,y)\geq D_1^{-1}\left(1+\frac{|x-y|}{\rho(x)}\right)^{\frac{1}{k_0+1}}.\]
	Trivially, $1+ \frac{\rho(x)}{|x-y|}\geq 1,$
	so choosing $C\geq D_0^{-1}$ gives \eqref{eq: drho lower bound} for $|x-y|>2\rho(x)$.
	Finally, taking $C=\max\left\{D_1^{-1}3^{\frac{1}{k_0+1}}, D_0^{-1}\right\}$, the proof is finished.
\end{proof}

The following is a generalization of a known property for the case where $d\mu(x) = V(x)dx$, as given in \cite[Lemma~1]{GLP08}. 

\begin{lem}	\label{lem: mu extra decay}	
	Let  $\mu$ be a measure verifying \eqref{eq: prop 1 mu} and \eqref{eq: prop 2 mu}. Then, there exist a constant $C>0$ such that for any $x_0\in \mathbb{R}^d$ and $R>0$ 
	\begin{equation*}
		\mu\left(B(x_0,R)\right) \leq C  R^{d-2} \left(1+\frac{R}{\rho(x_0)}\right)^{N},
	\end{equation*}
	for all $N\geq\log_2 D_\mu$, with $D_\mu$ as in \eqref{eq: prop 2 mu}. 
\end{lem}

\begin{proof}
	Let us first consider the case $R\leq \rho(x_0)$. By \eqref{eq: prop 1 mu} and \eqref{eq: mu equiv critical balls},
	\begin{align*}
		\mu\left(B(x_0,R)\right)\lesssim \left(\frac{R}{\rho(x_0)}\right)^{d-2+\delta_\mu} \mu\left(B(x_0,\rho(x_0))\right)
		\lesssim R^{d-2}.  
	\end{align*}
	
	Now, if $R> \rho(x_0)$, let $j_0 \in \mathbb{N}$ such that $2^{j_0-1}\rho(x_0)< R\leq 2^{j_0}\rho(x_0)$ and use \eqref{eq: prop 2 mu} $j_0$ times to get
	\[\mu(B(x_0,2^{j_0}r))\leq D_\mu^{j_0}\mu(B(x_0,r))+D_\mu^{j_0}r^{d-2}2^{(j_0-1)(d-2)}.\]
	
	Then, again by \eqref{eq: prop 1 mu} and \eqref{eq: mu equiv critical balls} 
	\begin{align*}
		\mu\left(B(x_0,R)\right)&\lesssim \left(\frac{R}{2^{j_0}\rho(x_0)}\right)^{d-2+\delta_\mu} \mu\left(B(x_0,2^{j_0}\rho(x_0))\right)\\
		& \lesssim \left(\frac{R}{2^{j_0}\rho(x_0)}\right)^{d-2+\delta_\mu}\left(D_\mu^{j_0} \mu\left(B(x_0,\rho(x_0))\right)+D_\mu^{j_0}\rho(x_0)^{d-2}2^{(j_0-1)(d-2)}\right)\\
		&\leq C  R^{d-2}\left(\frac{R}{\rho(x_0)}\right)^{\delta_\mu}\left(\frac{D_\mu}{2^{\delta_\mu}}\right)^{j_0}.
	\end{align*}
	
	Finally, since 
	$\log_2\left(\frac{R}{\rho(x_0)}\right)\leq j_0\leq 1+\log_2\left(\frac{R}{\rho(x_0)}\right)$, \begin{align*}
		D_\mu^{j_0}2^{-j_0\delta_\mu} &\leq D_\mu^{1+\log_2\left(\frac{R}{\rho(x_0)}\right)} 2^{-\delta_\mu\log_2\left(\frac{R}{\rho(x_0)}\right)}\\
		&\leq D_\mu D_\mu^{\log_2\left(1+\frac{R}{\rho(x_0)}\right)} \left(\frac{R}{\rho(x_0)}\right)^{-\delta_\mu}\\ 
		&\lesssim \left(1+\frac{R}{\rho(x_0)}\right)^{\log_2 D_\mu}\left(\frac{R}{\rho(x_0)}\right)^{-\delta_\mu}.
	\end{align*}
	Therefore, in both cases we obtain the desired estimate.
\end{proof}

\subsection{Riesz transforms}

We consider the Riesz transform $R_\mu=\nabla \mathcal{L}_\mu^{-1/2}$ and its adjoint ${R^*_\mu=\mathcal{L}_\mu^{-1/2} \nabla }$  as in \cite{Shen99}.

The Riesz transform $R_\mu$ can be expressed, for $f\in C_c^\infty(\mathbb{R}^d)$ and   $x\notin \supp(f)$, as 
\begin{align*}
	R_\mu f(x) 
	& = \frac{1}{\pi}\int_{0}^{\infty} \lambda ^{-\frac{1}{2}} \nabla(\mathcal{L}_\mu +\lambda)^{-1}f(x) d\lambda\\
	& = \frac{1}{\pi} \int_{0}^{\infty} \lambda^{-\frac{1}{2}} \int_{\mathbb{R}^d} \nabla_1 \Gamma_{\mu+ \lambda }(x,y)f(y)dy \,d\lambda.
\end{align*}
By Fubini's theorem,
\begin{align*}
	R_\mu f(x) & = \int_{\mathbb{R}^d} \frac{1}{\pi} \int_{0}^{\infty}\lambda^{-\frac{1}{2}}  \nabla_1 \Gamma_{\mu+ \lambda }(x,y)d\lambda f(y)dy = \int_{\mathbb{R}^d} K_\mu(x,y) f(y)dy,
\end{align*}
where $K_\mu $ is the singular kernel of $R_\mu$ given by
\begin{equation*}
	K_\mu(x,y)= \frac{1}{\pi} \int_{0}^{\infty}\lambda^{-\frac{1}{2}}  \nabla_1 \Gamma_{\mu+ \lambda }(x,y)d\lambda. 
\end{equation*}

The adjoint $R_\mu^*$ can be written as
\[ R^*_\mu f(x)=\int_{\mathbb{R}^d}K^*_\mu(x,y)f(y)dy= \int_{\mathbb{R}^d}-K_\mu (y,x)f(y)dy.\]

By \cite[Theorem~7.18]{Shen99} we know that $R_\mu$ is a Calder\'on--Zygmund operator when $\delta_\mu>1$, so it is bounded on $L^p(\mathbb{R}^d)$ for every $1<p<\infty$. By duality, $R_\mu^*$ is also bounded on $L^p(\mathbb{R}^d)$ for every $1<p<\infty$. There also is proved condition \eqref{eq: smooth pointwise}.  
Moreover, we have that  (see \cite[Theorem~0.17]{Shen99}) 
\begin{equation*}
	|	K_\mu(x,y)|\leq C \frac{e^{- \epsilon d_\mu(x,y)}}{|x-y|^{d}},
\end{equation*}
for some $\epsilon>0$, and the same estimate hold for $|K^*_\mu(x,y)|$ (see also \cite[Lemma 4.3]{Bailey21}).
Then, by Lemma~\ref{lem: drho lower bound} we obtain the following classification for the Riesz transform and its adjoint.  

\begin{prop}\label{prop: Riesz infty-delta}
	Let  $\mu$ be a measure verifying \eqref{eq: prop 1 mu} and \eqref{eq: prop 2 mu} with  ${\delta_\mu>1}$. Then, $R_\mu$ and $R_\mu^*$  are both exponential Schr\"odinger--Calder\'on--Zygmund operators of $(\infty,\delta)$ type as given in Definition \ref{def: infty-delta type} with constants $c=\frac{\epsilon}{2D_1}$ and $m=\frac{1}{k_0+1}$. 
\end{prop}

As a consequence of Proposition \ref{prop: Riesz infty-delta} above,  Proposition \ref{prop: endpoint infty-delta} and Theorem \ref{thm: extrapolation H weights} we obtain the following boundedness result for the case~${\delta_\mu>1}$.  

\begin{thm}
	Let  $\mu$ be a measure verifying \eqref{eq: prop 1 mu} and \eqref{eq: prop 2 mu} with  $\delta_\mu>1$. Then,
	if $m=\frac{1}{k_0+1}$, for any $c<\frac{\epsilon}{2D_1} {2^{-m}}$   and  $w$ such that $w^{-1} \in H_{1, c}^{\rho, m}$, we have that	 $R_\mu$ and $R_\mu^*$  are both bounded from $L^\infty (w)$ into $\BMO(w)$.  Moreover, both are bounded on $L^p(w)$, for every $1<p<\infty$ and {$w\in H_{p,c^*}^{\rho,m^*}$ with $m^*=\frac{m}{k_0+1}$ and $c^*<c(4C_0)^{-2m^*}$}.
\end{thm}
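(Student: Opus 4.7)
The plan is to deduce the theorem as a direct concatenation of the three previously established results, mirroring the proof of Theorem~\ref{teo: potencias imaginarias} verbatim but feeding in the Riesz classification instead of the one for imaginary powers.

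First, I would invoke Proposition~\ref{prop: clase Riesz y adjunta delta>1}, which has already done the analytic work: under the hypothesis $\delta_\mu>1$, both $R_\mu$ and $R_\mu^*$ are exponential Schr\"odinger--Calder\'on--Zygmund operators of $(\infty,\delta)$ type in the sense of Definition~\ref{def: infty-delta}, with size constants $c_1=\frac{\epsilon}{2D_1}$ and $m_1=\frac{1}{k_0+1}$ in the pointwise size estimate \eqref{eq: size pointwise}. This is precisely the input required by Proposition~\ref{prop: 5 de quique}.

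Second, I would plug that classification into Proposition~\ref{prop: 5 de quique} separately for $R_\mu$ and for $R_\mu^*$. For every $c<c_1=\frac{\epsilon}{2D_1}$ and every weight $w$ with $w^{-1}\in H_{1,c}^{\rho,m_1}$, the proposition yields $R_\mu\colon L^\infty(w)\to\BMO(w)$ (and the analogous bound for $R_\mu^*$), with operator norm depending on $w$ only through the constant appearing in the condition $w^{-1}\in H_{1,c}^{\rho,m_1}$. This is exactly the $L^\infty(w)\to\BMO(w)$ assertion of the theorem, taking $m_0=m_1=\frac{1}{k_0+1}$.

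Third, since the $L^\infty\to\BMO$ estimate above holds uniformly over the whole class of admissible weights with the required constant-dependence, the hypothesis of Theorem~\ref{teo: extrapolation} with $q=1$ is fulfilled for each of the two operators. Applying that extrapolation result then delivers boundedness on $L^p(w)$ for every $1<p<\infty$ and every $w\in H_{p,c}^\rho$, completing the argument. No real obstacle arises: the proof is a bookkeeping composition of three ready-made statements. The only step worth a brief verification is that the constant produced by Proposition~\ref{prop: 5 de quique} depends on the weight in the precise manner stipulated by the hypothesis of Theorem~\ref{teo: extrapolation}, which is transparent from the proof of the former.
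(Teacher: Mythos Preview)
Your proposal is correct and matches the paper's approach exactly: the paper states the theorem as an immediate consequence of Proposition~\ref{prop: clase Riesz y adjunta delta>1}, Proposition~\ref{prop: 5 de quique}, and Theorem~\ref{teo: extrapolation}, which is precisely the three-step chain you describe.
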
	

When $0<\delta_\mu<1$ we consider the adjoint operators $R_\mu^*$ and the estimates given in \cite[Lemma~4.3]{Bailey21}, which establishes the existence of constants $C,\epsilon>0$ such that 
\begin{equation}\label{eq: adjoint Riesz kernel size}
	|K^*_\mu(x,y)| \leq C \frac{e^{-\epsilon d_\mu(x,y)}}{|x-y|^{d-1}}\left(\int_{B \left(y,\frac{|y-x|}{2}\right)} \frac{d\mu(z)}{|z-y|^{d-1}} + \frac{1}{|x-y|}\right), 
\end{equation}	
for all $x,y\in \mathbb{R}^d$ with $x\neq y$.

We will now determine the type of operator that $R_\mu^*$ is, for $0<\delta_\mu<1$, and the corresponding boundedness results.

\begin{prop}\label{prop: adjoint Riesz s-delta}
	Let  $\mu$ be a measure verifying \eqref{eq: prop 1 mu} and \eqref{eq: prop 2 mu} with  $0<\delta_\mu<1$. Then $R_\mu^*$ is an exponential Schr\"odinger--Calder\'on--Zygmund operator of $(s,\delta)$ type  for some $0<\delta<1$ and  every $1<s<\frac{2-\delta_\mu}{1-\delta_\mu}$ as given in Definition \ref{def: s-delta type} with constants $c=\frac{\epsilon}{4D_1}$ and $m=\frac{1}{k_0+1}$.
\end{prop}

\begin{proof} Condition \ref{itm: weak-type T} of Definition~\ref{def: s-delta type} follows from the proof of \cite[Theorem~7.1]{Shen99} for every  $s'>2-\delta_\mu$, i.e., for every $1<s<\frac{2-\delta_\mu}{1-\delta_\mu}$.
	
	We now prove condition \eqref{eq: size Hormander}. Using \cite[Lemma~7.9]{Shen99} with  $1<s<\frac{2-\delta_\mu}{1-\delta_\mu}$ and Lemma \ref{lem: mu extra decay},  we have that, whenever $|x-x_0|<R/2$,
	\begin{align}\label{eq: G estimate}
		\left(\int_{R<|x_0-y|\leq 2R}\left(\int_{B \left(y,\frac{|y-x|}{2}\right)}\frac{d\mu(z)}{|z-y|^{d-1}}\right)^s dy \right)^{1/s}&\leq 
		\left(\int_{B(x,4R)}\left(\int_{B(x,4R)}\frac{d\mu(z)}{|z-y|^{d-1}}\right)^s dy \right)^{1/s}\nonumber \\
		&\leq C
		\frac{\mu\left(B(x,12R)\right)}{R^{d(1-(1/s))-1}} 
		\nonumber \\
		&
		\leq C R^{d/s-1} \left(1+\frac{R}{\rho(x)}\right)^{\log_2 D_\mu}.
	\end{align}
	Then, by Lemma \ref{lem: drho lower bound} and \eqref{eq: adjoint Riesz kernel size},
	\begin{align*}
		\left(\frac{1}{R^d}\int_{R<|x_0-y|\leq 2R}|K^*_\mu(x,y)|^s dy \right)^{1/s}& \lesssim \frac{1}{R^{d}}e^{-\frac{\epsilon}{2D_1}  \left(1+\frac{R}{\rho(x)}\right)^{\frac{1}{k_0+1}}}\left(1+\frac{R}{\rho(x)}\right)^{\log_2 D_\mu}\\
		&\lesssim \frac{1}{R^{d}}e^{-c  \left(1+\frac{R}{\rho(x)}\right)^{m}}. 	
	\end{align*}

	To prove condition \eqref{eq: smooth Hormander} we follow the ideas given in the proof of \cite[Theorem~7.18]{Shen99}. There exists $\delta_1 \in (0,1)$ such that for ${x_0 \in B\left(x,\frac{|x-y|}{8}\right)}$,
	\begin{align*}
		| \nabla_1 &\Gamma_{\mu+ \lambda}  (y,x) -  \nabla_1 \Gamma_{\mu+ \lambda }(y,x_0)|\\
		& \lesssim
		\left(\frac{|x-x_0|}{|x-y|}\right)^{\delta_1} \sup_{w\in B(x,|x-y|/2)} |  \nabla_1 \Gamma_{\mu+ \lambda }(y,w)|
		\\
		& \lesssim
		\left(\frac{|x-x_0|}{|x-y|}\right)^{\delta_1} \sup_{w\in B(x,|x-y|/2)} \left[e^{-\epsilon_3 d_{\mu+\lambda}(y,w)}\frac{1}{|y-w|^{d-2}} \left(\int_{B\left(y,\frac{|w-y|}{2}\right)}\frac{d\mu(z)}{|{z-y}|^{d-1}}+ \frac{1}{|w-y|}\right)\right]
		\\
		& \lesssim
		\left(\frac{|x-x_0|}{|x-y|}\right)^{\delta_1} \sup_{w\in B(x,|x-y|/2)} \left[e^{-\frac{\epsilon_3}{2} |y-w|}\frac{1}{|y-w|^{d-2}} \left(\int_{B\left(y,\frac{|w-y|}{2}\right)}\frac{d\mu(z)}{|{z-y}|^{d-1}}+ \frac{1}{|w-y|}\right)\right],
	\end{align*}
	where we have used \eqref{eq: grad fundamental solution bound} and the inequality 
	\begin{equation}\label{eq: d_mu+lambda ineq}
		d_{\mu+\lambda}(x,y)\geq \frac12\left(d_\mu(x,y)+d_\lambda(x,y)\right)\geq \frac12d_\mu(x,y)+ \frac{\sqrt{\lambda}}{2}|x-y|
	\end{equation}
	given in \cite[(21)]{Bailey21}. 
	
	By taking the supremum on $w$ we obtain 
	\begin{align*}
		&|K^*_\mu(x,y)-K^*_\mu(x_0,y)|\\
		&\lesssim \left(\frac{|x-x_0|}{|x-y|}\right)^{\delta_1} \frac{1}{|y-x|^{d-2}}\left(\int_{B(y,|y-x|)}\frac{d\mu(z)}{|z-y|^{d-1}}+ \frac{1}{|y-x|}\right) \int_0^{\infty}  \frac{e^{-\frac{\epsilon_3}{4} \lambda^{1/2}}}{\lambda^{1/2}} d\lambda\\
		& \lesssim \left(\frac{|x-x_0|}{|x-y|}\right)^{\delta_1} \frac{1}{|y-x|^{d-2}}\left(\int_{B(y,|y-x|)}\frac{d\mu(z)}{|z-y|^{d-1}}+ \frac{1}{|y-x|}\right). 
	\end{align*}
	
	Finally, proceeding as in the estimate of the kernel size and using \eqref{eq: G estimate}, we obtain \eqref{eq: smooth Hormander}, completing the proof of the proposition.
\end{proof}

By Propositions \ref{prop: adjoint Riesz s-delta} and \ref{prop: endpoint s-delta}, and Theorem \ref{thm: extrapolation H weights}, we deduce the boundedness result for the case ${0<\delta_\mu<1}$.  

\begin{thm}
	Let  $\mu$ be a measure verifying \eqref{eq: prop 1 mu} and \eqref{eq: prop 2 mu} with  $0<\delta_\mu<1$. For $1<s<\frac{2-\delta_\mu}{1-\delta_\mu}$, $m=\frac{1}{k_0+1}$, $c<s' \frac{\epsilon}{4D_1}{2^{-m}}$   and  $w$ such that ${w^{-s'} \in H_{1, c}^{\rho, m}}$ we have that  $R_\mu^*$  is bounded from $L^\infty (w)$ into $\BMO(w)$. Moreover it is bounded on $L^p(w)$, for every $s'<p<\infty$ and {$w\in H_{p/s',c^*}^{\rho,m^*}$ with $m^*=\frac{m}{k_0+1}$ and $c^*<s'c(4C_0)^{-2m^*}$}.
\end{thm}

\subsection{Laplace transform type multipliers}

Let $\phi\in L^\infty(\mathbb{R}_+)$, and let 
\[m_\phi(\lambda)=\lambda \int_0^\infty e^{-\lambda t}\phi(t)\, dt, \quad \lambda>0.\]
By means of the Spectral Theorem, we can define the Laplace transform type multipliers $m_\phi(\mathcal{L}_\mu)$, which are bounded on $L^2(\mathbb{R}^d)$ for any $\phi\in L^\infty(\mathbb{R}_+)$ (see \cite[Corollary~3,~p.~121]{SteinTopics}).

Special cases of multipliers of Laplace transform type are the imaginary powers $\mathcal{L}_\mu^{i\gamma}$ that arise when $m_\phi(\lambda)=\lambda^{i\gamma}$ for $\lambda>0$ and $\gamma\in \mathbb{R}$; that is, when $\phi(t)=t^{-i\gamma}/\Gamma(1-i\gamma)$.

If we consider the heat semigroup associated with $\mathcal{L}_\mu$,
\[\mathcal W_tf(x):=e^{-t\mathcal{L}_\mu}f(x)=\int_{\mathbb{R}^d} \mathcal W_t(x,y) f(y)dy, \quad f \in L^2(\mathbb{R}^d),\, x \in \mathbb{R}^d, t>0, \]
we can write
\[m_\phi(\mathcal{L}_\mu)f(x)= \int_{0}^{\infty} \phi(t) \mathcal{L} e^{-t\mathcal{L}_\mu}f(x)dt=\int_{0}^{\infty}\phi(t) \partial_t \mathcal W_tf(x) dt, \qquad x \in \mathbb{R}^d.\]
Its kernel can be expressed as
\[\mathcal{M}_\phi(x,y)= \int_{0}^{\infty} \phi(t) \partial_t \mathcal{W}_t(x,y) dt, \quad x,y\in \mathbb{R}^d.\] 

In order to classify the operator  $m_\phi(\mathcal{L}_\mu)$, we will need some estimates for the derivative of the heat kernel $\mathcal W_t$, already proved by Wu and Yan in \cite{WuYan16} (for $d\mu(x)=V(x) dx$, see \cite[Proposition 4]{DGMTZ}). We state them below. 

\begin{lem}[{\cite[Lemma~2.3 and Lemma~3.8]{WuYan16}}]\label{lem: Q_t kernel estimates} \leavevmode
	\begin{enumerate}
		\item \label{itm: t partial deriv W_t} 
		Let $k_0$ as in Definition~\ref{def: critical radius function}.
		There exist constants $c_0$ and $C$ such that \[\left|t \partial_t\mathcal{W}_t(x,y)\right| \leq C t^{-d/2} {e^{-\frac{|x-y|^2}{4t}}}\exp\left(-c_0\left(1+\frac{\max\{|x-y|, \sqrt{t/2}\}}{\rho(x)}\right)^{\frac{1}{k_0+1}}\right).\]
		\item \label{itm: t difference partial deriv W_t} For every $ 0<\delta<\min\{1,\delta_\mu\}$ and $N\in \mathbb{N}$, there exist constants $c$ and $C_N$ such that for any $|h|\leq \sqrt{t}$, 
		\[|t \partial_t \mathcal{W}_t(x+h,y)-t \partial_t \mathcal{W}_t(x,y)| \leq   \left(\frac{|h|}{\sqrt{t}}\right)^{\delta} t^{-d/2}{e^{-\frac{|x-y|^2}{4t}}}\frac{C_N}{\left(1+\frac{\sqrt{t}}{\rho(x)}+\frac{\sqrt{t}}{\rho(y)}\right)^N},\]
		with $x,y\in \mathbb{R}^d$ and $t>0$.
	\end{enumerate}
\end{lem}

We are now in position to establish the following classification for   $m_\phi(\mathcal{L}_\mu)$.

\begin{prop}
	Let $\phi$ be as above. The operator $m_\phi(\mathcal{L}_\mu)$  is an exponential Schr\"odinger--Calder\'on--Zygmund operator of $(\infty,\delta)$ type as given in Definition~\ref{def: infty-delta type} with constants $c=c_0$ (the constant in Lemma~\ref{lem: Q_t kernel estimates}\ref{itm: t partial deriv W_t}) and $m=\frac{1}{k_0+1}$. 
\end{prop}

\begin{proof}
	We shall first prove \eqref{eq: size pointwise}. For any $x,y\in \mathbb{R}^d$,	by Lemma~\ref{lem: Q_t kernel estimates}\ref{itm: t partial deriv W_t},
	\begin{align*}
		\left|\mathcal{M}_\phi(x,y)\right|
		& 
		\leq \|\phi\|_\infty \int_{0}^{\infty}  |\partial_t \mathcal{W}_t(x,y)| dt\\
		&\leq C \exp\left(-c_0\left(1+\frac{|x-y|}{\rho(x)}\right)^{\frac{1}{k_0+1}}\right)\int_{0}^{\infty} t^{-d/2} {e^{-\frac{|x-y|^2}{4t}}} \frac{dt}{t}.
	\end{align*}
	By performing the change of variables $u=\frac{|x-y|^2}{t}$, we get
	\begin{align*}
		\left|\mathcal{M}_\phi(x,y)\right|&\leq  C \exp\left(-c_0\left(1+\frac{|x-y|}{\rho(x)}\right)^{\frac{1}{k_0+1}}\right)\frac{1}{|x-y|^d} \int_0^\infty u^{d/2}e^{-u/4} \frac{du}{u}\\
		&\leq \frac{C}{|x-y|^d} \exp\left(-c_0\left(1+\frac{|x-y|}{\rho(x)}\right)^{\frac{1}{k_0+1}}\right),
	\end{align*}
	and \eqref{eq: size pointwise} holds with $c_1=c_0$ and $m=\frac{1}{k_0+1}$. 
	
	For the smoothness condition \eqref{eq: smooth pointwise}, we consider $|x-y|>2|x- x_0|$ and $0<\delta<\min\{1, \delta_\mu\}$. We have that
	\begin{align}\label{eq: kernel Laplace difference}
		\nonumber |\mathcal{M}_\phi(x,y)-\mathcal{M}_\phi(x_0,y)|&\leq \|\phi\|_\infty \int_0^\infty |\partial_t \mathcal{W}_t(x,y)- \partial_t \mathcal{W}_t(x_0,y)| dt\\
		&\nonumber \sim\int_0^{|x-x_0|^2} |t\partial_t \mathcal{W}_t(x,y)- t\partial_t \mathcal{W}_t(x_0,y)| \frac{dt}{t}\\
		&\quad +\int_{|x-x_0|^2}^\infty |t\partial_t \mathcal{W}_t(x,y)- t\partial_t \mathcal{W}_t(x_0,y)| \frac{dt}{t}.
	\end{align}
	
	For the second term,  as  $|x-x_0|\leq \sqrt{t}$, by Lemma~\ref{lem: Q_t kernel estimates}\ref{itm: t difference partial deriv W_t} with $h=x-x_0$ it follows that
	\begin{align*}
		|t\partial_t \mathcal{W}_t(x,y)&- t\partial_t \mathcal{W}_t(x_0,y)|\\
		&\leq C_1 \left(\frac{|x-x_0|}{\sqrt{t}}\right)^{\delta} t^{-d/2}\exp\left(-\frac{|x-y|^2}{4t}\right)\\
		&\leq C_\delta \left(\frac{|x-x_0|}{\sqrt{t}}\right)^{\delta}\left(\frac{|x-y|}{\sqrt{t}}\right)^{-\delta} t^{-d/2}\exp\left(-\frac{|x-y|^2}{8t}\right)\\
		&= C_\delta \left(\frac{|x-x_0|}{|x-y|}\right)^{\delta} t^{-d/2}\exp\left(-\frac{|x-y|^2}{8t}\right).
	\end{align*} 
	Then
	\begin{align*}
		\int_{|x-x_0|^2}^\infty &|t\partial_t \mathcal{W}_t(x,y)- t\partial_t \mathcal{W}_t(x_0,y)| \frac{dt}{t}\\
		&\leq C\left(\frac{|x-x_0|}{|x-y|}\right)^{\delta}\int_{|x-x_0|^2}^\infty t^{-d/2} \exp\left(-\frac{|x-y|^2}{8t}\right)\frac{dt}{t}\\
		&\leq C\frac{1}{|x-y|^d} \left(\frac{|x-x_0|}{|x-y|}\right)^\delta \int_0^\infty u^{d/2-1} e^{-u/8} du\\
		&\leq C\frac{1}{|x-y|^d} \left(\frac{|x-x_0|}{|x-y|}\right)^\delta.
	\end{align*}

	For $|x-x_0|>\sqrt{t}$, we use Lemma~\ref{lem: Q_t kernel estimates}\ref{itm: t partial deriv W_t} to get, for $\delta$ as before, that
	\begin{align*}
		|t\partial_t \mathcal{W}_t(x,y)|&\leq C t^{-d/2} \exp\left(-\frac{|x-y|^2}{4t}\right)\\
		&\leq C \left(\frac{|x-x_0|}{\sqrt{t}}\right)^{\delta} t^{-d/2} \exp\left(-\frac{|x-y|^2}{4t}\right)\\
		&\leq 
		C_\delta \left(\frac{|x-x_0|}{|x-y|}\right)^{\delta} t^{-d/2} \exp\left(-\frac{|x-y|^2}{8t}\right).
	\end{align*}

	And, since $|x-y|\sim |x_0-y|$, we also obtain the same bound for $|t\partial_t \mathcal{W}_t(x_0,y)|$ when $|x-x_0|>\sqrt{t}$. Therefore, the first term in \eqref{eq: kernel Laplace difference} is bounded as desired. 
	
	Finally, as previously mentioned, $m_\phi(\mathcal{L}_\mu)$ is bounded on $L^2(\mathbb R^d)$. This, together with the conditions above, allows us to conclude that it is a Calderón-Zygmund type operator. Therefore, $m_\phi(\mathcal{L}_\mu)$ is bounded on $L^p(\mathbb{R}^d)$ for all $p>1$, and the proposition is thus proved.
\end{proof}

As an immediate consequence we obtain the following.
\begin{thm}
	Let  $\mu$ be a measure verifying \eqref{eq: prop 1 mu} and \eqref{eq: prop 2 mu}.
	Then, if ${m=\frac{1}{k_0+1}}$,
	for any $c<c_0{2^{-m}}$  and  $w$ such that $w^{-1} \in H_{1, c}^{\rho, m}$, we have that	 $m_\phi(\mathcal{L}_\mu)$ is bounded from $L^\infty (w)$ into $\BMO(w)$.  Moreover, it is  bounded on $L^p(w)$, for every $1<p<\infty$ and  {$w\in H_{p,c^*}^{\rho,m^*}$ with $m^*=\frac{m}{k_0+1}$ and $c^*<c(4C_0)^{-2m^*}$}.
\end{thm}

\subsection{Operators associated to a potential.}

In the particular case where the measure $\mu$ is given by $d\mu(x)= V(x) dx$, with $V$ a nonnegative function belonging to the classical reverse H\"older class $RH_q$ for $q>\frac d2$, we will be able to study the operators $T_j= (- \Delta +V)^{-j/2} V^{j/2}$ for $j=1,2$. Actually, they can be found in \cite{BCH13Extrapol}, where weighted estimates were proved for weights in the class  $A_p^{\rho}$, where we understand by $\rho$ the critical radius function associated to the potential $V$. 

Condition \ref{itm: weak-type T} of Definition \ref{def: s-delta type} holds for $T_j$, $j=1,2$  when $s\geq2q$ and $s\geq q$, respectively (see \cite[Theorems~5.10 and 3.1]{Shen95}) and condition  \eqref{eq: smooth Hormander} is contained in the proof of \cite[Theorem~1]{GLP08} when $s=2q$ and $s=q$ for $T_1$ and $T_2$, respectively, and some $\delta>0$.

Now, from the results in \S\ref{sec: proofs main results}, we shall obtain estimates for weights in ${H_{p,c}^{\rho,m}}$, for appropriated parameters. In order to do so, we will see that the size condition obtained in \cite[Theorem~8]{BCH13Extrapol} can be improved in order to have \eqref{eq: size Hormander} for the kernels of $T_j$, $j=1,2$.

For each $j=1,2$, we have 
\begin{align*}
	T_j f(x) &= (-\Delta + V)^{-j/2}V^{j/2}f(x)\\
	& = \frac{1}{\pi}\int_{0}^{\infty} \lambda ^{-j/2}\int_{\mathbb{R}^d}   \Gamma_{V+ \lambda }(x,y) V(y)^{j/2} f(y) dy d\lambda
	\\
	&= \int_{\mathbb{R}^d} K_j(x,y)f(y) dy,
\end{align*}
and by the  estimates for the fundamental solution of  $-\Delta +V$  given in \eqref{eq: fundamental solution bounds}, \eqref{eq: d_mu+lambda ineq} and Lemma \ref{lem: drho lower bound}, there exists $\epsilon_j>0$ such that  
\begin{equation}\label{eq: K_j kernel bound}
	|K_j(x,y)|\leq C  \frac{\exp\left(-\frac{\epsilon_j}{2D_1} \left(1+\frac{|x-y|}{\rho(x)}\right)^{\frac{1}{k_0+1}}\right)}{|x-y|^{d-j}} V(y)^{j/2}, \quad j=1,2. \end{equation}

Then, condition \eqref{eq: size Hormander} follows in the same way as in the proof of \cite[Theorem 8]{BCH13Extrapol} considering $s=2q$ and $s=q$ for the cases $j=1$ and $j=2$, respectively. 

Therefore, we get that both $T_j$ are exponential Schr\"odinger--Calder\'on--Zygmund operators of $(s, \delta)$ type for $s$ as above and some  $\delta>0$. 

As a consequence of Proposition \ref{prop: endpoint s-delta} and Theorem \ref{thm: extrapolation H weights} we obtain the following  result for each $T_j$.

\begin{thm}
	Let $V$ be as above with   $q>\frac{d}2$ , and let $\epsilon_j>0$ be as in \eqref{eq: K_j kernel bound}, $j=1,2$. Then, if  $m=\frac{1}{k_0+1}$, for any $c_j<\frac{\epsilon_j}{2D_1}{2^{-m}}$ and  $w$ such that $w^{-s'} \in H_{1, c_j}^{\rho, m}$, with $s=2q$ when $j=1$ and   $s=q$ when $j=2$,  we have that	$T_j$  is bounded from $L^\infty (w)$ into $\BMO(w)$. Furthermore, $T_j$ is also bounded on $L^p(w)$ for every $s'<p<\infty$ and {$w\in H_{p/s',c_j^*}^{\rho,m^*}$ with $m^*=\frac{m}{k_0+1}$ and $c_j^*<s'c_j(4C_0)^{-2m^*}$, for $j=1,2$.}
\end{thm}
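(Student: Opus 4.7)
The plan is to proceed in two steps, following the template set by the proofs of Theorem~\ref{teo: potencias imaginarias} and the results for the Riesz transform. First, I would invoke the analysis carried out in the paragraphs immediately preceding the statement: for $d\mu=V(x)\,dx$ with $V\in RH_q$ and $q>d/2$, the operators $T_j$ have been shown to be exponential Schr\"odinger--Calder\'on--Zygmund operators of $(s,\delta)$ type in the sense of Definition~\ref{def: s-delta}, with $s=2q$ for $j=1$, $s=q$ for $j=2$, some $\delta>0$, and kernels satisfying \eqref{eq: kernel k_j}. Reading off the parameters in the exponential size condition \eqref{eq: size Hormander} directly from \eqref{eq: kernel k_j}, the natural constants are $c_1=\epsilon_j/(2D_1)$ and $m_1=1/(k_0+1)=m_0$.

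Next, I would apply Proposition~\ref{prop: 6 de quique} to obtain the endpoint estimate $T_j\colon L^\infty(w)\to\BMO(w)$. That proposition requires $w^{-s'}\in H_{1,c}^{\rho,m_1}$ with $c<s'c_1=s'\epsilon_j/(2D_1)$. Since $s'>1$, the hypothesis $c_j<\epsilon_j/(2D_1)$ of the theorem trivially gives $c_j<s'\epsilon_j/(2D_1)$, so Proposition~\ref{prop: 6 de quique} applies with the choice $c=c_j$ and $m_1=m_0$. This yields the endpoint bound, with constant depending on $w$ only through the $H_{1,c_j}^{\rho,m_0}$-constant of $w^{-s'}$, which is precisely the uniform dependence demanded by Theorem~\ref{teo: extrapolation}.

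Finally, the $L^p$ statement is obtained by feeding this endpoint bound into Theorem~\ref{teo: extrapolation} with $q=s'$: the uniform $L^\infty(w)\to\BMO(w)$ estimate over all $w$ with $w^{-s'}\in H_{1,c_j}^{\rho}$ is exactly its input, and its output is the boundedness of $T_j$ on $L^p(w)$ for every $s'<p<\infty$ and every $w\in H_{p/s',c_j}^{\rho}$. I do not anticipate a genuine obstacle: all the heavy machinery has already been set up (the $(s,\delta)$-classification of the $T_j$ via \eqref{eq: kernel k_j} and \cite{GLP08}, Proposition~\ref{prop: 6 de quique}, and the extrapolation theorem). The only point that warrants careful bookkeeping is the matching of the exponential constants between \eqref{eq: kernel k_j} and the hypothesis $c_j<\epsilon_j/(2D_1)$, which is slightly stronger than strictly necessary but keeps the statement uniform in $j$ and independent of $s'$.
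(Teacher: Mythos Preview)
Your proposal is correct and follows exactly the approach the paper takes: the paper's proof consists of the single sentence ``As a consequence of Proposition~\ref{prop: 6 de quique} and Theorem~\ref{teo: extrapolation} we obtain the following result for each $T_j$,'' and you have correctly unpacked this by reading off the constants $c_1=\epsilon_j/(2D_1)$ and $m_1=m_0=1/(k_0+1)$ from \eqref{eq: kernel k_j}, checking that the hypothesis $c_j<\epsilon_j/(2D_1)$ suffices for Proposition~\ref{prop: 6 de quique} (since $s'>1$), and then feeding the endpoint estimate into Theorem~\ref{teo: extrapolation} with $q=s'$. Your observation that the extrapolation step literally outputs boundedness on $L^p(w)$ for $w\in H^\rho_{p/s',c_j}$ is also accurate.
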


\bibliographystyle{acm}

\end{document}